\documentclass[12pt]{amsart}
\usepackage{fullpage}
\usepackage{amscd, amsaddr}
\usepackage{tikz, tikz-cd, extpfeil}
\usepackage{amsmath, enumitem}
\usepackage{fullpage}

\usepackage{color}
\usepackage{hyperref}

\newtheorem{Theorem}{Theorem}[section]
\newtheorem{Lemma}[Theorem]{Lemma}
\newtheorem{Proposition}[Theorem]{Proposition}

\theoremstyle{definition}
\newtheorem{Definition}[Theorem]{Definition}

\theoremstyle{remark}
\newtheorem{Remark}[Theorem]{Remark}


\newcommand{\C}{\mathbb{C}}

\newcommand{\Z}{\mathbb{Z}}


\newcommand{\g}{\mathfrak{g}}
\newcommand{\crt}{\mathfrak{c}}

\newcommand{\h}{\mathfrak{h}}

\renewcommand{\sl}{\mathfrak{sl}}

\newcommand{\z}{\mathfrak{z}}
\newcommand{\m}{\mathfrak{m}}

\newcommand{\Hyp}{\mathcal{H}}
\newcommand{\Refl}{\mathcal{R}}


\newcommand{\Ad}{\operatorname{Ad}}

\newcommand{\reg}{{\operatorname{reg}}}

\newcommand{\GL}{\operatorname{GL}}

\newcommand{\id}{\operatorname{id}}
\newcommand{\Id}{\operatorname{Id}}

\title[Hyperplane arrangements and Vinberg theta groups]{Hyperplane arrangements and Vinberg's $\theta$-groups}
\author[F. Ambrosio]{Filippo Ambrosio}
\address{FSU Jena, Fakult\"at f\"ur Mathematik und Informatik,\\ Inselplatz 5, 07743 Jena (Germany)}
\email{filippo.ambrosio@uni-jena.de}
\author[A. Santi]{Andrea Santi}
\address{Universit\`a di Roma Tor Vergata, Dipartimento di Matematica,\\ Via della Ricerca Scientifica 1, 00133  Roma (Italy)}
\email{santi@mat.uniroma2.it}
\date{\today}
\subjclass[2020]{17B70, 17B40, 20F55} 
\keywords{Vinberg theta groups, hyperplane arrangements} 

\begin{document}

\begin{abstract}
Let $\g = \bigoplus_{i \in \Z /m\Z} \g_i$ be a periodically graded semisimple complex Lie algebra.
In this note, we give a uniform proof of the recent result by W. de Graaf and H. V. L\^e that
the hyperplane arrangement determined by the restrictions of the roots of $\g$ to a Cartan subspace $\mathfrak c\subset\g_1$ coincides with the hyperplane arrangement of (complex) reflections of the little Weyl group of $\g = \bigoplus_{i \in \Z /m\Z} \g_i$.
\end{abstract}

\maketitle
\section{Introduction} \label{s_intro}
Let $\g = \bigoplus_{i \in \Z /m\Z} \g_i$ be a periodically graded semisimple complex Lie algebra, and choose any connected semisimple complex algebraic group $G$ with Lie algebra $\g$.
The connected subgroup $G_0$ of $G$ with Lie algebra $\g_0$ is reductive 
and the action of $G_0$ on $\g_1$ naturally induced by the adjoint action of $G$ is called a Vinberg $\theta$-group and denoted for short by $(G_0,\g_1)$. 
Periodically graded semisimple Lie algebras are an important source of representations of complex reductive algebraic groups, originally introduced by Vinberg \cite{Vin, Vin2}, as a non-trivial generalization of the adjoint action of a semisimple algebraic group on its Lie algebra and the isotropy representation of a symmetric space \cite{K, KR}.

Vinberg $\theta$-groups are an active topic of research, see for instance the recent \cite{MR4873513, CES, MR3801422, MR3890218}, and \cite{MR3050830, preprintLR, MR3152016} in relation with Bhargava's advances in the arithmetic theory
of elliptic curves.
They have been investigated over the field of real numbers as well \cite{MR4745882,  MR4407677, dGL, MR3283749}, particularly in connection with their orbit structure and applications to physics  
\cite{MR4644081, MR4452072, MR3608618},
and over fields of characteristic not necessarily zero \cite{Levy, MR3065993, RLYG}. In those three papers, together with \cite{MR2111215}, the relationship between the little Weyl group and the (classical) Weyl group was clarified, gradings of positive rank were classified, and the existence of a Kostant-Weierstrass slice was established. (We refer the reader to \S \ref{s_prelim} for the definitions of the little Weyl group $W$ and its action on a Cartan subspace $\mathfrak c$.)
Finally, Vinberg $\theta$-groups arise naturally also in the context of 
the representation theory of reductive groups over a $p$-adic field $\mathbb{F}$ --  
stable $G_0$-orbits are strictly related to supercuspidal representations of the rational points of $G$ over $\mathbb{F}$ attached to elliptic $Z$-regular elements of the Weyl group \cite{MR3164986} --
and in the context of the character and perverse sheaves on periodically graded Lie algebras \cite{MR4554668, MR3697026,
MR3829497}.

Vinberg $\theta$-groups share many important properties with the adjoint action of a semisimple complex Lie algebra \cite{Vin}: 
\begin{itemize}
\item[(i)] Any element $x\in\g_1$ decomposes uniquely into the sum $x=x_s+x_n$ of two commuting elements $x_s,x_n\in\g_1$ with $x_s$  semisimple and $x_n$ nilpotent;
\item[(ii)] Any two Cartan subspaces of $\g_1$ are conjugate under $G_0$ and any semisimple element of $\g_1$ is contained in a Cartan subspace;
\item[(iii)] The $G_0$-orbit through $x\in\g_1$ is closed if and only if $x$ is semisimple while it is unstable (i.e., its closure contains $0$) if and only if $x$ is nilpotent;
\item[(iv)] There is a Restriction Theorem \`a la Chevalley: the embedding of the Cartan subspace $\mathfrak c$ in $\g_1$ yields an isomorphism of graded algebras $\mathbb C[\g_1]^{G_0}\rightarrow \mathbb C[\mathfrak c]^{W}$;
\item[(v)] $W$ is finite and generated by complex reflections, hence $\mathbb C[\mathfrak c]^{W}$ is a 
polynomial algebra;
\item[(vi)] Two elements $x,y\in\mathfrak c$ are $G_0$-conjugated if and only if they can be mapped one to the other by a transformation from $W$.
\end{itemize}
In particular any element $x\in\g_1$ admits a Jordan decomposition $x=x_s+x_n$ as in (i).
We briefly discuss in \S\ref{s_notation} the relationship of this decomposition with the  more general concept  of a Jordan-Kac-Vinberg decomposition in the sense of \cite[Appendix]{Kac_irs2}, defined for  elements in any representation of a complex reductive algebraic group.
Moreover, like complex semisimple Lie algebras, $\theta$-groups allow for only finitely many nilpotent orbits -- these facts and Chevalley Restriction Theorem (iv) make it possible, at least in principle, to classify $G_0$-orbits 
in $\g_1$.

Nevertheless, the picture is quite richer. For instance, whereas the geometric properties of symmetric spaces are close to those of the adjoint representation, the general case of $\theta$-groups is more interesting from the point of view of reflection groups.
Indeed, the little Weyl group of a symmetric space is itself a Weyl group (for a root system related to the root system of $\g$ in a natural way), but for general $\theta$-groups it is only generated by {\it complex} reflections, finite order linear transformations which fix pointwise an hyperplane of the Cartan subspace. 

In this short note, we focus on another similarity occurring between the case of the adjoint representation and the general periodically graded case.
Fix a Cartan subalgebra $\h \subset \g$ and let $\Phi$ be root system of $\g$ w.r.t. $\h$.
There are several ways to define the Weyl group of $\g$:
one possibility is to define it as the quotient $N_G(\h) / Z_G(\h)$, 
another one as the subgroup of $\GL(\h)$ generated by all the (real) reflections about the hyperplanes in $\Hyp_\Phi=\{\ker \alpha\mid \alpha \in \Phi\}$.
 Although the naive analogue for Vinberg $\theta$-groups of the latter presentation cannot coincide in general with the little Weyl group  (there are multiple complex reflections about the same hyperplane), we illustrate a weaker version of this result that holds for all Vinberg $\theta$-groups. 
Assume that $\g$ admits a $\Z / m \Z$-grading $\g = \bigoplus_{i \in \Z /m\Z} \g_i$ for some positive integer $m$ and fix a Cartan subspace $\crt\subset\g_1$.
Then it is possible to embed $\crt$ in a homogeneous Cartan subalgebra $\h \subset \g$ in such a way that the degree $1$ component of $\h$ is precisely $\crt$.
By considering the set $\Sigma$ of nontrivial restrictions to $\crt$ of the roots in $\Phi$, we obtain the collection $\Hyp_\Sigma=\{\ker \sigma\mid \sigma \in \Sigma\}$ of hyperplanes in $\crt$. 
It then turns out that the following result holds:
\begin{Theorem}
\label{Thm1}
Let $\g=\bigoplus_{i \in \Z /m\Z} \g_i$ be a periodically graded semisimple complex Lie algebra, with Cartan subspace $\crt \subset \g_1$.
Then the hyperplane arrangement  $\Hyp_\Sigma$  induced by restrictions of roots to $\crt$ coincides with the hyperplane arrangement $\Hyp_W$ arising from complex reflections in the little Weyl group $W$.
\end{Theorem}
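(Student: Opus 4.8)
The plan is to reduce the equality $\Hyp_\Sigma=\Hyp_W$ to a local statement at a generic point of each hyperplane. Fix a hyperplane $H\subset\crt$ and pick $x\in H$ generic, that is, lying on $H$ but on no other hyperplane of $\Hyp_\Sigma\cup\Hyp_W$. Since $W$ is a finite complex reflection group on $\crt$ by (v), Steinberg's theorem shows that the stabilizer $W_x$ is generated by the reflections of $W$ whose mirror contains $x$; as $x$ is generic on $H$, every such mirror must equal $H$, so that $H\in\Hyp_W$ if and only if $W_x\neq 1$. On the other hand, a linear form on $\crt$ vanishing at the generic point $x$ vanishes on all of $H$, so $H\in\Hyp_\Sigma$ if and only if there is a root $\alpha\in\Phi$ with $\alpha|_\crt\neq 0$ and $\alpha(x)=0$. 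Comparing with the root-space descriptions $\z_\g(x)=\h\oplus\bigoplus_{\alpha(x)=0}\g_\alpha$ and $\z_\g(\crt)=\h\oplus\bigoplus_{\alpha|_\crt=0}\g_\alpha$, this last condition is exactly $\z_\g(x)\supsetneq\z_\g(\crt)$. Hence the theorem follows once we prove, for generic $x\in H$, the equivalence $W_x\neq 1\iff\z_\g(x)\supsetneq\z_\g(\crt)$.

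To attack this I would invoke the standard centralizer reduction, due to Vinberg (see also Levy): for semisimple $x\in\crt$ the centralizer $\z:=\z_\g(x)$ is a $\Z/m\Z$-graded reductive subalgebra in which $\crt$ is a Cartan subspace of the $\theta$-group $(\z_0,\z_1)$, and the stabilizer $W_x$ is identified with the little Weyl group $W_\z$ of $(\z_0,\z_1)$ inside $\GL(\crt)$. Moreover $\z_\g(x)=\z_\g(\crt)$ holds precisely when $\crt\subseteq Z(\z_\g(x))$, since $\z_\g(\crt)$ is the largest subalgebra centralizing $\crt$. The desired equivalence therefore becomes the intrinsic statement that the little Weyl group of a graded reductive $(\z_0,\z_1)$ with Cartan subspace $\crt$ is trivial if and only if $\crt\subseteq Z(\z)$.

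One direction is immediate: if $\crt\subseteq Z(\z)$ then $[\z_0,\crt]=0$, so the connected group $Z_0$ with $\Lie(Z_0)=\z_0$ centralizes $\crt$, giving $N_{Z_0}(\crt)=Z_{Z_0}(\crt)$ and $W_\z=1$. For the converse I would pass to the semisimple part $\bar\z:=[\z,\z]$: non-centrality of $\crt$ means its image $\crt'$ in $\bar\z$ is nonzero, and by the genericity of $x$ the central part $\crt\cap Z(\z)$ equals $H$, of codimension one, so $\crt'$ is one-dimensional. Since $W_\z$ fixes $\crt\cap Z(\z)$ pointwise and acts on the complementary line exactly as $W_{\bar\z}$ acts on $\crt'$, it suffices to establish the following rank-one statement: if $\bar\z$ is a semisimple $\Z/m\Z$-graded Lie algebra whose Cartan subspace $\crt'\subseteq\bar\z_1$ is one-dimensional and nonzero, then $W_{\bar\z}\neq 1$.

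This rank-one statement is where I expect the real work to lie. Writing $\crt'=\C e$ with $e$ regular semisimple, $W_{\bar\z}$ is a finite, hence cyclic, subgroup of $\GL(\crt')=\C^{\times}$, and by the Chevalley Restriction Theorem (iv) applied to $\bar\z$ it is nontrivial exactly when the degree-zero group has no nonzero invariant linear form on $\bar\z_1$. I would argue by contradiction. Using that the Killing form $\kappa$ pairs $\bar\z_1$ nondegenerately with $\bar\z_{-1}$ and is nondegenerate on $\bar\z_0$, a nonzero invariant linear form on $\bar\z_1$ yields a nonzero $w\in\bar\z_{-1}$ with $[\bar\z_0,w]=0$; for $y\in\bar\z_1$ and $h\in\bar\z_0$ one then computes $\kappa([y,w],h)=\kappa(y,[w,h])=0$, and nondegeneracy of $\kappa|_{\bar\z_0}$ forces $[\bar\z_1,w]=0$, so in particular $w\in\z_{\bar\z}(e)$. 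The goal is then to deduce that $w$ is central in $\bar\z$, contradicting semisimplicity. I would try to extract this from the maximality of $\crt'=\C e$ — which makes $\C e$ the unique line of semisimple elements of $\z_{\bar\z}(e)_1$ — and propagate the vanishing of $\ad(w)$ through the remaining graded components. Carrying out this last step cleanly and uniformly in $m$, without invoking a case-by-case classification of rank-one $\theta$-groups, is the main obstacle; once it is in place, the equivalences of the preceding paragraphs assemble into the theorem.
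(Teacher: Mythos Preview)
Your approach is essentially the paper's: both directions use Steinberg's theorem, the passage to the centralizer of a generic point of $\Pi_\sigma$, the reduction to the semisimple part $\bar\z=[\z_\g(x),\z_\g(x)]$ with its induced grading, and the identification of the problem with the statement that a periodically graded \emph{semisimple} Lie algebra of positive rank has nontrivial little Weyl group (Proposition~\ref{lem_weylnontriv}). Your start of the proof of that statement---Chevalley restriction yields a nonzero $G_0$-fixed $w\in\bar\z_{-1}$, and pairing with $\bar\z_0$ gives $[\bar\z_1,w]=0$---is also the paper's.

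The step you flag as the obstacle is completed in the paper (Remark~\ref{rem:useful}(i)) not via maximality of $\crt'$ but via the observation that $w$ is \emph{semisimple}: its $G_0$-orbit $\{w\}$ is closed. Hence $\bar\z=\bar\z^w\oplus[\bar\z,w]$, the sum being orthogonal for the invariant form, so the form restricts nondegenerately to the graded subalgebra $\bar\z^w$. In particular $\dim(\bar\z^w)_i=\dim(\bar\z^w)_{-i}$ for every $i$. You already have $\bar\z_0,\bar\z_1\subset\bar\z^w$, so $\dim(\bar\z^w)_{-1}=\dim(\bar\z^w)_1=\dim\bar\z_1=\dim\bar\z_{-1}$ forces $\bar\z_{-1}\subset\bar\z^w$. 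Then $([\bar\z_2,w],\bar\z_{-1})=(\bar\z_2,[w,\bar\z_{-1}])=0$ and nondegeneracy on $\bar\z_1$ give $\bar\z_2\subset\bar\z^w$; iterate to conclude $w\in\z(\bar\z)=0$. This closes your sketch and matches the paper line by line.
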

This result was recently proved by W. de Graaf and H. V. L\^e \cite[Theorem 3.1]{dGL} by means of a case-by-case analysis and a computer-assisted proof.
Our aim is to give a uniform proof of this result, which is based solely on the geometric properties underlying Vinberg $\theta$-groups. This also provides the means for explicit constructions of complex reflections in $W$ associated to a given restricted root $\sigma\in\Sigma$, as explained in \S\ref{lastsection}. In particular, we treat the case of outer diagram automorphisms in a  uniform manner.  
\medskip\par\noindent
{\bf Structure of the paper.} 
In \S\ref{s_notation} we  recall some basic preliminaries on Vinberg $\theta$-groups and 
establish an important auxiliary result, needed in the proof of the main Theorem \ref{Thm1}. The latter is carried out in \S\ref{s_main}, and
we conclude  in \S\ref{lastsection} with further considerations and examples.
\smallskip\par\noindent
{\bf Notations.} 
Throughout the paper we work over the field of complex numbers $\C$, and we fix a positive integer $m$ and a primitive $m$-th root of unity $\omega$.
If $K$ is a complex algebraic group, we denote its identity component by $K^\circ$ and its Lie algebra by $\mathfrak{k}$. We denote the center of $K$ by $Z(K)$ and the center of $\mathfrak{k}$ by $\z(\mathfrak{k})$.
The adjoint action of $K$ on $\mathfrak{k}$ is denoted by  $\Ad \colon K \to \GL(\mathfrak{k}), k \mapsto \Ad_k$, and for any $x \in \mathfrak{k}$ we set  $K x \coloneqq \{\Ad_k (x) \mid k \in K\}$. 
We will write $N_K(X) := \{k \in K \mid \Ad_k(x) \in X \mbox{ for all } x \in X\}$ for the normalizer of any linear subspace $X \subset \mathfrak{k}$ and $Z_K(X) = \{k \in K \mid \Ad_k (x) = x \mbox{ for all } x \in X \}$ for the centralizer of any subset $X \subset \mathfrak{k}$.
They are both algebraic subgroups of $K$, with  corresponding Lie algebras 
$\mathfrak{n}_{\mathfrak k}(X)=\{ y \in \mathfrak{k} \mid [y,x]\in X \mbox{ for all } x \in X\}$ and
$\mathfrak{k}^X = \{ y \in \mathfrak{k} \mid [y,x] = 0 \mbox{ for all } x \in X\}$, respectively.
If $X = \{x\}$ is a singleton, we simply write $K^x:= Z_K(X)$ and $\mathfrak{k}^x := \mathfrak{k}^X$.

\section{Basic notions and preliminary results} \label{s_notation}
\subsection{Vinberg's $\theta$-groups}
A \emph{periodically graded semisimple Lie algebra} is given by a triple $\{\g, \theta, m\}$, where 
$\theta \colon \g \to \g$ is an automorphism of order $m$ of a semisimple
Lie algebra $\g$.
Indeed, the automorphism $\theta$ endows $\g$ with a $\Z / m \Z$-grading, i.e., a direct sum decomposition
\begin{equation} \label{eq_graded}
\g = \bigoplus_{i \in \Z /m\Z} \g_i\quad\text{with}\quad [\g_i , \g_j] \subset \g_{i+j}\quad\text{for all}\quad i, j \in \Z/m\Z\;. 
\end{equation}
In particular $\g_0$ is a Lie subalgebra of $\g$ and any 
$\g_i = \{ x \in \g \mid \theta(x) = \omega^i x\}$ a module for 
$\g_0$.
Conversely, given a $\Z / m \Z$-grading of $\g$ as in \eqref{eq_graded}, we can define an automorphism of $\g$ whose eigenspaces are the homogeneous components of \eqref{eq_graded} (and whose order is a divisor of $m$). 
An element $x \in \g$ is called homogeneous if $x \in \g_i$ for some $i \in \Z / m \Z$. Similarly, a subset $X \subset \g$ is called homogeneous if $X = \bigoplus_{i \in \Z/ m\Z} (X \cap \g_i)$, equivalently, if $X$ is $\theta$-stable.

Let $G$ be any connected semisimple complex algebraic group with Lie algebra $\g$ (for instance, 
the adjoint group) and $G_0$ be the connected subgroup of $G$ with Lie algebra $\g_0$. It is a closed reductive subgroup. The restriction of the adjoint action $\Ad \colon G \to \GL(\mathfrak{\g})$  gives a representation of $G_0$ on $\g_1$ and the pair $(G_0, \g_1)$ is usually referred to as a Vinberg $\theta$-group.

We fix a nondegenerate bilinear form 
\begin{equation} \label{eq_form}
(\cdot, \cdot) \colon \g \times \g \to \C
\end{equation} which is
\begin{enumerate}[label = (\alph*)]
\item[(i)] associative, i.e., $([x,y],z) = (x,[y,z])$ for all $x,y,z \in \g$;
\item[(ii)] $\theta$-invariant, i.e., $(\theta(x), \theta(y)) = (x,y)$ for all $x,y \in \g$.
\end{enumerate} 
The existence of such a form is provided by the Killing form of $\g$. 
For all $i,j\in\Z /m \Z$ with $i + j \neq 0$, we have $(\g_i , \g_j) = 0$, thanks to $\theta$-invariance.
In particular $\g_i \cong (\g_{-i})^*$ as representations of $G_0$ and 
$\dim \g_i = \dim \g_{-i}$, for all $i \in \Z / m \Z$.

For any given $x \in \g_1$, the semisimple  and nilpotent parts of its Jordan decomposition in $\g$ still belong  to $\g_1$, thus inducing a Jordan decomposition on $\g_1$: these are the unique elements $x_s,x_n\in\g_1$ with $x_s$ semisimple, $x_n$ nilpotent, such that $x=x_s+x_n$ and $[x_s,x_n]=0$.

\begin{Remark}
As an interesting aside, one may ask to compare this decomposition with the concept of a Jordan-Kac-Vinberg (for short JKV) decomposition  for the representation of $G_0$ on $\g_1$.
The existence of such a decomposition for any rational representation of a connected complex reductive group is established in \cite[Appendix]{Kac_irs2}. 
Let us briefly recall the definition of a JKV decomposition in our context:
for
$x \in \g_1$, a decomposition
$x = {\mathrm x}_s + {\mathrm x}_n$ is a JKV if  ${\mathrm x}_s, {\mathrm x}_n \in \g_1$, 
the orbit $G_0 {\mathrm x}_s$ is closed, the orbit $G_0^{\mathrm x_s} {\mathrm x}_n$ closes at $0$, and $G_0^x \subset G_0^{\mathrm x_s}$. 
The classical Jordan decomposition of $x \in \g_1$ satisfies the axioms of a JKV decomposition.

For the adjoint representation of $G$ the axioms of a JKV decomposition are equivalent to those of  the classical Jordan decomposition, so that there is a unique JKV decomposition.
In general, this is not the case for Vinberg $\theta$-groups whose order $m\geq 2$, as illustrated in the following example.

Let us consider the symmetric grading $\g=\g_{0}\oplus\g_1$ of $\g=\mathfrak{sl}_{2n}(\mathbb C)$ given by the diagonal $n\times n$ blocks 
$\g_0\cong\mathfrak{s}\big(\mathfrak{gl}_{n}(\mathbb C)\oplus \mathfrak{gl}_{n}(\mathbb C)\big)$ and the off-diagonal $n\times n$ blocks $\g_1\cong M_n(\mathbb C)\oplus M_n(\mathbb C)$. 
The group $G=\mathrm{SL}_{2n}(\mathbb C)$ and the associated Vinberg $\theta$-group is given by 
$$G_0=\left\{g=(g_1,g_2)\mid g_1,g_2\in \mathrm{GL}_n(\mathbb C)\;\text{s.t.}\;\det(g_1)\det(g_2)=1\right\}\cong {\mathrm S }\big(\mathrm{GL}_n(\mathbb C)\times \mathrm{GL}_n(\mathbb C)\big)$$ acting on any $x=(x_+,x_-)\in\g_1$ via 
$g\cdot x:=(g_1 x_+ g_2^{-1},g_2 x_- g_1^{-1})$.
The subalgebra $\g_0$  of $\g$ acts on $\g_1$ by the infinitesimal version of the latter equations, while the bracket between two odd elements is given by $[x,y]=[(x_+,x_-),(y_+,y_-)]=(x_+ y_- -y_+ x_-, x_- y_+ - y_- x_+) \in \g_0$. 

Consider now ${\mathrm x}_s:=(\operatorname{Id}_n,-\operatorname{Id}_n)\in \g_1$.
 The orbit $G_0{\mathrm x}_s$ is closed, since ${\mathrm x}_s$ is semisimple, and the stabilizer $G_0^{{\mathrm x}_s}$ coincides with the diagonal embedding $\operatorname{diag}(\mathrm{SL}^{\pm}_n(\mathbb C))\subset G_0$ of the group of $n\times n$ matrices with determinant $\pm 1$.
 For any non-zero nilpotent matrix $v \in M_n(\mathbb C)$, we set
${\mathrm x}_n:=(v,0)\in\g_1$ and note that the orbit 
$
G_0^{{\mathrm x}_s}{\mathrm x}_n=(\mathrm{SL}^{\pm}_n(\mathbb C)v,0)\cong \mathrm{SL}^{\pm}_n(\mathbb C)v
$
identifies with the usual adjoint orbit of $v$ in $\mathfrak{sl}_n(\mathbb C)$, thus closing at $0$ by classical results.
We finally let $x:={\mathrm x}_s+{\mathrm x}_n$ and claim that this is a JKV decomposition, since $G_0^{x}=\operatorname{diag}(Z_{\mathrm{SL}^{\pm}_n(\mathbb C)}(v))\subset G_0^{{\mathrm x}_s}$. However, it is not a Jordan decomposition, because $[{\mathrm x}_s,{\mathrm x}_n]=(v,-v)\neq 0$ in $\g_0$.

This shows that there exist gradings for which some of their elements $x\in\g_1$ admit more than one JKV decomposition. However, at the present time, the authors were not able to find any such example for which the action of $G_0$ on $\g_1$ is irreducible.
It might thus be an interesting problem to characterize the Vinberg $\theta$-groups for which the JKV decomposition of any  element is unique (thus coinciding with the Jordan decomposition). 
\end{Remark}

\subsection{Cartan subspaces and preliminary results}
\label{s_prelim}

A \emph{Cartan subspace} is an abelian subspace $\crt$ of $\g_1$ consisting of semisimple elements, and which is maximal with these properties. The dimension $r=\dim\mathfrak c$ of any Cartan subspace is called the \emph{rank} of $\{ \g, \theta \}$. Given a Cartan subspace $\crt$, it is always possible to find a homogeneous Cartan subalgebra $\h$ of $\g$ such that $\crt = \h \cap \g_1$, see \cite[\S 3.1]{RLYG}. 
We fix once and for all  $\crt \subset \h \subset \g$ with the above properties and let $\g = \h \oplus \bigoplus_{\alpha \in \Phi} \g_\alpha$ be the root space decomposition of $\g$ w.r.t. $\h$.
Then $\theta$ induces an action on $\Phi$ via $\theta \cdot \alpha \coloneqq \alpha \circ \theta^{-1}$ and root subspaces are permuted via $\g_{\theta\cdot\alpha} = \theta (\g_{\alpha})$.

The \emph{little Weyl group} $W := N_{G_0} (\crt) / Z_{G_0} (\crt)$
is a finite subgroup of $\GL(\crt)$ generated by complex reflections\footnote{In the rest of the paper, we will always talk about reflections omitting the adjective ``complex''.}, i.e., linear transformations $s:\crt\to \crt$  of finite order and such that $\dim \ker(s - \id_\crt) = \dim \crt -1$.
We let $\Refl(W)$ be the subset of $W$ consisting of reflections and $\Hyp_W$ the hyperplane arrangement on $\crt$ consisting of all the hyperplanes $\Pi_s \coloneqq \ker(s - \id_\crt)$ for $s \in \Refl(W)$.
Now, consider the natural inclusion $\rho \colon \crt \to \h$.
We denote by $\Sigma \coloneqq \{ \beta \circ \rho \mid \beta \in \Phi \} \setminus \{0\} \subset \crt^*$ the set of restricted roots and by $\Hyp_{\Sigma}$ the hyperplane arrangement on $\crt$ consisting of all the hyperplanes $\Pi_{\sigma} \coloneqq \ker \sigma$ for $\sigma \in \Sigma$. Equivalently, this is the collection $\{ \ker \beta \cap \crt \mid \beta \in \Phi \} \setminus \{ \crt \}$ of subspaces of $\crt$. 
\\

We believe the following result should be known as a consequence of the full classification of gradings of positive rank and their little Weyl groups \cite{Levy, MR3065993, RLYG}, but we could not locate a simple and direct proof in the literature, so we provide it here.

\begin{Proposition} \label{lem_weylnontriv}
Let $\{ \g, \theta, m \}$ be a  periodically graded semisimple Lie algebra of positive rank.
Then the little Weyl group $W$ of $\{ \g, \theta, m \}$ is nontrivial.
\end{Proposition}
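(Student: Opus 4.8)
The plan is to argue by contradiction, assuming $W=\{1\}$ and contradicting the positivity of the rank $r=\dim\crt\ge 1$. The first step is to translate triviality of $W$ into the existence of a linear invariant. Since $W$ is finite, $W=\{1\}$ would mean that $W$ acts trivially on $\crt$, so $\C[\crt]^{W}=\C[\crt]$; by the Chevalley Restriction Theorem (property (iv)) we would then get a graded isomorphism $\C[\g_1]^{G_0}\cong\C[\crt]$. As $r\ge 1$, the degree-$1$ component of $\C[\crt]$ is $\crt^{*}\neq 0$, so $\C[\g_1]^{G_0}$ has a nonzero element of degree $1$; equivalently $(\g_1^{*})^{G_0}\neq 0$.

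Next I would move this statement into $\g_{-1}$. Since the form $(\cdot,\cdot)$ is associative it is $\Ad(G)$-invariant, and it restricts to a nondegenerate $G_0$-equivariant pairing between $\g_{-1}$ and $\g_1$; hence $\g_1^{*}\cong\g_{-1}$ as $G_0$-modules. A nonzero invariant linear form therefore corresponds to a nonzero $z\in\g_{-1}$ fixed by $G_0$, i.e.\ with $[\g_0,z]=0$ since $G_0$ is connected. Thus everything reduces to proving $\g_{-1}\cap\g^{\g_0}=0$, which contradicts the previous step.

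The core of the proof is this vanishing, which I would establish uniformly, with no distinction between inner and outer $\theta$. Let $z\in\g_{-1}$ satisfy $[\g_0,z]=0$ and write its Jordan decomposition $z=z_s+z_n$. Both $z_s$ and $z_n$ lie in $\g_{-1}$ (the homogeneous components are $\theta$-eigenspaces, so the Jordan parts stay homogeneous), and both centralize $\g_0$, because $\g^{\g_0}$ is an algebraic subalgebra and hence stable under Jordan decomposition. For the semisimple part I would run a degree-shift induction: $\ad(z_s)$ sends $\g_{k+1}$ into $\g_k$, so whenever $\g_k\subseteq\g^{z_s}$ one has $\ad(z_s)^2 x\in[z_s,\g_k]=0$ for all $x\in\g_{k+1}$, and semisimplicity of $\ad(z_s)$ forces $\ad(z_s)x=0$; thus $\g_{k+1}\subseteq\g^{z_s}$. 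Starting from the hypothesis $\g_0\subseteq\g^{z_s}$ and cycling through the residues $k=0,1,\dots,m-1$, this shows $z_s\in\z(\g)=0$. For the nilpotent part I would invoke the homogeneous Jacobson--Morozov theorem: a nonzero nilpotent $z_n\in\g_{-1}$ is the nilpositive element of an $\sl_2$-triple $(z_n,h,f)$ with $h\in\g_0$ and $f\in\g_1$; then $2z_n=[h,z_n]=0$ because $z_n$ centralizes $h\in\g_0$, forcing $z_n=0$. Hence $z=0$.

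The main obstacle is the semisimple case: the nilpotent case is immediate from homogeneous Jacobson--Morozov, and the two reduction steps are formal, so the only genuine content is ruling out a nonzero semisimple element of $\g_{-1}$ centralizing $\g_0$. The degree-shift induction is exactly what removes the need for the case-by-case classification used in \cite{dGL}; the point to handle carefully is that $\ad(z_s)$ lowers the $\Z/m\Z$-degree by one, so the hypothesis of centralizing $\g_0$ propagates one degree at a time all the way around the grading.
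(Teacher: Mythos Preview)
Your proof is correct and follows the same overall architecture as the paper: assume $W$ trivial, apply the Chevalley Restriction Theorem to produce a nonzero $G_0$-invariant linear functional on $\g_1$, dualize via the form to get a nonzero $z\in\g_{-1}$ centralizing $\g_0$, and then argue that $z\in\z(\g)=0$. The difference lies entirely in that last step. The paper observes immediately that $z$ is semisimple, since $G_0 z=\{z\}$ is closed (property (iii)); this makes your Jordan decomposition and the Jacobson--Morozov argument for $z_n$ unnecessary. From there the paper propagates the centralization through the grading using the invariant form: $([ \g_0,z],\g_1)=(\g_0,[z,\g_1])$ gives $[\g_1,z]=0$ by nondegeneracy on $\g_0$, then a dimension comparison inside $\g^z$ (using $\g=\g^z\oplus[\g,z]$) forces $[\g_{-1},z]=0$, and one iterates. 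Your propagation via ``$\ad(z_s)^2 x=0$ plus semisimplicity of $\ad(z_s)$ implies $\ad(z_s)x=0$'' is a clean alternative that avoids the form entirely and is arguably more direct; it would also work verbatim for $z$ itself once you note $z=z_s$. So both arguments are short and uniform; yours trades the bilinear-form bookkeeping for an extra (ultimately redundant) appeal to graded Jacobson--Morozov.
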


\begin{proof}
Let $\dim \crt \geq 1$, and assume by contradiction that $W$ is trivial. 
By the restriction theorem \`a la Chevalley, the restriction of polynomial functions $\C[\g_1] \to \C[\crt]$ yields an isomorphism of {\it graded} algebras  $\C[\g_1]^{G_0} \to \C[\crt]^W = \C[\crt]$. 
Therefore, there exists a nonzero $G_0$-invariant  $v^* \in \C[\g_1]_1= \g_1^*$.
Via the pairing \eqref{eq_form} we may identify $\g_1^*$ with $\g_{-1}$, so there exists a nonzero $v \in \g_{-1}$ s.t. $G_0 v = \{ v \}$.
Thus $\g_0^v=\g_0$  and by  \cite[(iii) of Corollary  11]{CES}, applied to the grading of $\g$ where $\g_i$ and $\g_{-i}$ exchange their roles for every $i\in \Z / m \Z$, we get that $v\in \z(\mathfrak{\g})$. 
Since $\g$ is semisimple, then $\z(\mathfrak{\g})=0$ and $v=0$, a contradiction.
\end{proof}
\begin{Remark} 
\label{rem:useful}\hfill
\begin{itemize}
	\item[(i)] The reader desiring a self-contained argument in place of \cite[(iii) of Corollary  11]{CES}, may use the following one. Since $[\g_0, v] = 0$, we have $0 = ( [\g_0, v], \g_{1}) = (\g_0 , [v, \g_{1}])$ and 
thus $[\g_{1},v] = 0$, because the restriction of \eqref{eq_form} to $\g_0$ is nondegenerate. Furthermore
$v$ is semisimple, since its orbit is closed.
In particular $\g = \g^v \oplus [\g, v]$, where the sum is orthogonal w.r.t. \eqref{eq_form}, and the restriction of \eqref{eq_form}  to the $\Z / m \Z$-graded Lie algebra $\g^v$ 
is $\theta$-invariant and nondegenerate. Thus $\dim (\g^v \cap \g_{-1}) = \dim (\g^v \cap \g_{1})=\dim\g_1=\dim\g_{-1}$ for all $i \in \Z / m\Z$, and $[\g_{-1}, v] = 0$.
Arguing as above
$([\g_{2}, v], \g_{-1}) = (\g_{2}, [v, \g_{-1}]) = 0$ says $[\g_{2},v] = 0$, and then $[\g_{-2},v]=0$ by looking at the dimensions. We may now iterate the argument to get $[v, \g_i] = 0$ for all $i \in \Z/m\Z$, namely $v \in \z(\g)$.

	\item[(ii)] 	
We	note that the order of $\theta$ does not play any role in this result, which is, in fact, true also in the case $m=1$. However, if $\theta$ is an inner automorphism of order $m\geq 2$, then there exists a lower bound on $W$ that depends on $m$. 
In fact, if $\theta$ is inner, there exists a Cartan subalgebra $\mathfrak h$ of $\g$ that is contained in $\g_0$, see \cite[\S 3.6]{GOV}.
Hence $\theta=\Ad_g$ for some $g\in Z_{G}(\mathfrak h)=H\subset G_0$, where $H$ is the Cartan subgroup corresponding to $\h$, and the class of $g$ in $W=N_{G_0}(\crt)/Z_{G_0}(\crt)$ has order $m$. Thus $W$ includes a subgroup isomorphic to $\Z/ m \Z$.
This bound has already been observed in \cite[Lemma 23]{RLYG}.
\end{itemize}
\end{Remark}

\section{Proof of the main result} \label{s_main}
We come to the proof of  Theorem \ref{Thm1}. 
The claim is trivial if the rank of $\{ \g, \theta, m \}$ is zero (with $\Hyp_W=\Hyp_\Sigma=\varnothing$), so we assume that the rank is positive.
\vskip0.3cm\par\noindent
{\it The inclusion $\Hyp_W \subset \Hyp_\Sigma$.}
\vskip0.3cm\par\noindent
We assume that $x \in \crt$ is such that $x \notin \bigcup_{\sigma \in \Sigma} \Pi_{\sigma}$ and proceed to show that $x \notin \bigcup_{s \in \Refl(W)} \Pi_s$.
We have that 
\begin{equation*}
\g^x = \h \oplus \bigoplus_{\alpha \in \Phi, \alpha(x) = 0} \g_{\alpha}\\
 = \h \oplus \bigoplus_{\alpha \in \Phi, \alpha \circ \rho  = 0 } \g_{\alpha} = \g^{\crt}\;.
\end{equation*}
This implies $G^x = Z_G(\crt)$, since both centralizers are connected due to \cite[Corollary 3.11]{StTor}.
In particular $N_{G_0}(\crt) \cap G^x \subset  Z_{G_0}(\crt)$, whence the stabilizer of $x$ in $W$ is the trivial group, and $x \notin \Pi_s$ for all $s \in \Refl(W)$.

\vskip0.3cm\par\noindent
{\it The inclusion $\Hyp_\Sigma \subset \Hyp_W$.}
\vskip0.3cm\par\noindent
For $\sigma \in \Sigma$, we shall exhibit an element $s\in W$ such that $s \neq \id_\crt $ and $s(x) = x$ for all $x \in \Pi_{\sigma}$, in particular  $s \in \Refl(W)$.
Choose $x$ in the regular locus  of $\Pi_{\sigma}$, namely 
\begin{equation} \label{eq_reglochyp}
x \in \Pi_{\sigma} \setminus \bigcup_{\tau \in \Sigma, \Pi_\tau \neq \Pi_\sigma} \Pi_{\tau}\;.
\end{equation}
We now consider the semisimple Lie algebra $\m := [\g^x, \g^x]$ with the induced 
$\Z / m \Z$-grading and  show that $\{ \m, \theta_{\mid \m}, m \}$ has rank one. 
First, the Levi subalgebra $\g^x$ of $\g$ is homogeneous and
 $\g^x  = \h \oplus \bigoplus_{\alpha \in \Phi_x} \g_{\alpha}$, 
with  $\Phi_x := \{\alpha \in \Phi \mid \alpha(x) = 0 \}\subset \Phi$ a root subsystem. Since $x$ is in the regular locus of $\Pi_{\sigma}$, we have
\begin{equation}
\label{phix}
\Phi_x=\{\alpha \in \Phi \mid \alpha \circ \rho \in \C \sigma \}\supset\{\alpha \in \Phi\mid \ker \alpha\supset \crt\}\;.
\end{equation}
The two summands in the decomposition of $\g^x$ are homogeneous, using that $\Phi_{x}$ is $\theta$-stable. 
The subalgebra $\g^x$ also admits the direct sum decomposition $\g^x = \z(\g^x) \oplus \m$, explicitly
\begin{equation*} 
 \g^x  = \rlap{$\overbrace{\phantom{\z(\g^x)\oplus(\h \cap \mathfrak{g}^x_{ss})}}^{\h}$}
\z(\g^x)
\oplus\underbrace{(\h \cap \m) \oplus \bigoplus_{\alpha \in \Phi_x} \g_{\alpha}}_{\m}\;,
\end{equation*}
where all three summands are homogeneous.
\vskip0.3cm\par\noindent
\emph{Claim I: the equalities $\Pi_\sigma = \z(\g^x) \cap \crt = \z(\g^x)_1$ hold.}
\vskip0.3cm\par\noindent
Since $\z(\g^x)\subset\h$ and $\h_1=\crt$, it is sufficient to show that $\Pi_\sigma=\z(\g^x)\cap\crt$.
Let $q \in \Pi_\sigma$, $y \in \g^x$,
and write $y = y_0 + \sum_{\alpha \in \Phi_x} y_\alpha$ with $y_0 \in \h$, $y_\alpha\in\g_\alpha$.
Then 
\begin{equation*}
[q,y] = \sum_{\alpha \in \Phi_x} \alpha(q) y_\alpha  = 0\;,
\end{equation*} since $\alpha \circ \rho \in \C \sigma$. This proves
the inclusions $\Pi_\sigma\subset\z(\g^x)\cap\crt\subset\crt$, in particular we see that $\dim\Pi_\sigma=\dim \crt - 1 \leq \dim \z(\g^x) \cap \crt \leq \dim \crt$.
Finally, choosing $p \in \crt \setminus \Pi_\sigma$, it is clear that $p\in\g^x$ and $[p , \g^x]=[p, \bigoplus_{\alpha \in \Phi_x} \g_\alpha ] \neq 0$, thus $\crt$ is not contained in $\z(\g^x)$.
\vskip0.3cm\par\noindent
\emph{Claim II: the equality $\dim \crt \cap \m = 1$ hold.}
\vskip0.3cm\par\noindent
Choose $p \in \crt$ s.t. $\crt = \Pi_\sigma \oplus \C p$. By Claim I, we have
$
p \in \g^x_1 = \z(\g^x)_1 \oplus \m_1 = \Pi_\sigma \oplus \m_1
$ and thus we may write $p = z + p_\sigma$ with $z \in \Pi_\sigma$ and $p_\sigma \in  \m_1$.
Because $p, z \in \crt$, then also $p_\sigma \in \crt$.
From $0 \neq \sigma(p) = \sigma(p_\sigma)$ we conclude that $p_\sigma \neq 0$ and
$\crt = \Pi_\sigma \oplus \C p_\sigma$, proving the claim.
\vskip0.3cm\par\noindent
\begin{Remark}
Claim II implies in particular that the order of $\theta_{\mid \m}$ is precisely
$m$ (and not just a divisor of it). 
It also says that $\{ \m, \theta_{\mid \m}, m \}$ has positive rank, and this would be enough to conclude our proof in view of Proposition \ref{lem_weylnontriv}. However, it is not much more difficult to show that the rank is precisely one, as we now do. 
\end{Remark}
\vskip0.3cm\par\noindent
\emph{Claim III: the line $\crt \cap \m$ is a Cartan subspace of $\{ \m, \theta_{\mid \m}\}$.}
\vskip0.2cm\par\noindent
Choose $p_{\sigma} \in \m$ s.t. $\crt=\Pi_\sigma \oplus \C p_{\sigma}$ as in Claim II. 
Clearly $\C p_{\sigma} \subset \m_1$ consists of commuting semisimple elements, 
so it lies in a Cartan subspace $\crt_\m$ for $\m$.
Since $[\m, \Pi_\sigma] = 0$, the subspace $\crt_\m \oplus \Pi_\sigma$ is abelian and consists of semisimple elements.
Hence $\dim ( \crt_\m \oplus \Pi_\sigma) \geq \dim \Pi_\sigma + 1$, which agrees with the rank of $\{ \g, \theta \}$, and $\crt_\m \oplus \Pi_\sigma$ is a Cartan subspace of $\{ \g, \theta \}$ by maximality. In summary $\dim \crt_\m = 1$ and $\crt_\m = \C p_\alpha$, proving Claim III.
%
%
\vskip0.3cm\par\noindent
We complete the proof.
Let $M$ be the closed connected subgroup of $G$ with Lie algebra $\m$, and $M_0$ the closed connected subgroup of $M$ with Lie algebra $\m_0$.
We have the inclusions $M \subset G^x=Z_G(\Pi_\sigma)$ and $M_0 \subset G_0$.
The little Weyl group $W(\m, \theta_{\mid \m})= N_{M_0} (\C p_\sigma) / Z_{M_0} (\C p_\sigma)$ is 
a finite (unitary)
reflection group acting irreducibly on a line, so it is a cyclic group (see, e.g., \cite[Theorem 8.29]{LT}).
It is nontrivial by Proposition \ref{lem_weylnontriv}.
The sought element $s \in \Refl(W)$ can be induced by any choice of nontrivial element in $W(\m, \theta_{\mid \m})$ by means of the natural injection 
\vskip0.2cm\par\noindent
\begin{equation*}
W(\m, \theta_{\mid \m})=N_{M_0} (\C p_\sigma) / Z_{M_0} (\C p_\sigma) \hookrightarrow W=N_{G_0} (\crt) / Z_{G_0} (\crt)\;,
\end{equation*}
\vskip0.2cm\par\noindent
since all $m \in M$ satisfy $\Ad_m{}_{\mid \Pi_\sigma} = \id_{\Pi_\sigma}$. The proof is completed. $\square$
\begin{Remark} 
\label{rk_dk}
Part of the arguments in the proof of Theorem \ref{Thm1} can be deduced also from a construction in the work of Dadok and Kac \cite{DK} on polar representations, a wider class of representations including Vinberg $\theta$-groups.
Such generality exceeds the scope of this note, so we limit ourselves to stating the relevant results only in the setting of graded Lie algebras.

We start by introducing $G_0$-regularity for elements of the Cartan subspace: we define
$$\crt^{G_0-\reg} = \{ y \in \crt \mid \dim G_0y = \max_{x \in \crt} \{\dim G_0x\} \}$$
and say that any $y \in \crt^{G_0-\reg}$ (or its orbit $G_0 y$) is $G_0$-regular. By  \cite[Lemma 2.11]{DK} the singular locus $\crt \setminus \crt^{G_0-\reg}$ is the union of a family of hyperplanes in $\crt$, which coincides with our $\Hyp_\Sigma$.
Indeed, by \cite[Corollary 11 and Example 2]{CES}, an element  $y \in \crt$ is $G_0$-regular if and only if $\dim G y$ is maximal among the $G$-orbits of semisimple elements in $\g_1$.
In other words, $y \in \crt$ is $G_0$-regular if and only if $\g^y = \g^\crt$ and, in turn, if and only if $y \in \crt \setminus \
 \bigcup \{\Pi_\sigma \mid \sigma \in \Sigma \}$.

Dadok and Kac then  produce, for each $\Pi_\sigma \in \Hyp_\Sigma$,  a connected subgroup $G_0^\sigma \subset G_0$ and a vector subspace $\g_1^\sigma \subset \g_1$ such that  $G_0^\sigma$ acts on $\g_1^\sigma$ by restriction as a polar representation,  and show that $W^\sigma := N_{G_0^\sigma} (\crt) / Z_{G_0^\sigma} (\crt)$ is a cyclic subgroup of $W$ consisting of (complex) reflections.
For the reader's convenience, we determine $(G_0^\sigma , \g_1^\sigma)$ in our situation.
The group $G_0^\sigma$ is defined as the connected subgroup of $G_0$ with Lie algebra $\g^{\Pi_\sigma}_0$, namely 
it is $G_0^\sigma = (Z_G(\Pi_\sigma) \cap G_0)^\circ$ (we recall that $Z_G(\Pi_\sigma)$ is always connected but $Z_G(\Pi_\sigma) \cap G_0 $ is not necessarily so), and the vector subspace
$\g_1^\sigma= \crt \oplus [\g_0^{\Pi_\sigma}, \crt] \oplus U$, with $U$ a $\g_0^\crt$-invariant complement to $\crt \oplus [\g_0, \crt]$ in $\g_1$. A direct computation then shows that $U=[\g^{\mathfrak c},\g^{\mathfrak c}]_1$ and $\g_1^\sigma  = \g_1^{\Pi_\sigma}$, whence the action of $G_0^\sigma$ on $\g_1^\sigma$ is again a Vinberg $\theta$-group.

Applying the construction to  Vinberg $\theta$-groups, we get the inclusion $\langle W^\sigma \mid \Pi_\sigma \in \Hyp_{\Sigma} \rangle \subset W$. On the other hand, the group $M_0$ in our proof of Theorem \ref{Thm1} is a subgroup of $G_0^{\sigma}$ and the quotient $W(\m, \theta_{\mid \m})=N_{M_0} (\C p_\sigma) / Z_{M_0} (\C p_\sigma)$ is canonically isomorphic to $W^\sigma$. We may thus invoke our Proposition \ref{lem_weylnontriv} to say that $W^\sigma$ is always nontrivial and, finally, that $\Hyp_\Sigma \subset \Hyp_W$.

We remark that
Dadok and Kac expect that a stronger result holds, namely the identity
 $\langle W^\sigma \mid \Pi_\sigma \in \Hyp_{\Sigma} \rangle = W$, cf. \cite[\S2 Conjecture 2, p. 521]{DK}.
Theorem \ref{Thm1} can be regarded as a weaker version of this conjecture; we plan to study the conjecture in the context of Vinberg $\theta$-groups in another work.
\end{Remark} 

\section{Further considerations and examples} \label{lastsection}

In this final section, we construct an explicit representative in $N_{M_0}(\C p_\sigma)$ lifting the sought reflection for some relevant examples.
The approach can be regarded as a generalization of the idea underlying the classical proof of the analogue of Theorem \ref{Thm1} in the $m = 1$ case, see \cite[Proposition 11.35]{Hall}.

\subsection{Preliminaries}
Let $\{ \g, \theta, m \}$ be a  periodically graded semisimple Lie algebra of positive rank $r=\dim\mathfrak c$ and fix a homogeneous Cartan subalgebra $\h=\bigoplus_{i \in \Z / m\Z}\h_i$ of $\g$ such that $\crt = \h_1$. 
For any root $\alpha \in \Phi$, we choose a root vector $e_\alpha$, and let 
$\mathcal{O}_\alpha:=\left\{\theta^i\cdot\alpha\mid 0\leq i\leq m-1\right\}$ be the orbit of $\alpha$ under the natural action of $\theta$. We denote the cardinality of $\mathcal{O}_\alpha$ by $|\mathcal{O}_\alpha|$. 

We also give the following:

\begin{Definition}\label{def:k}
For any $j\in \Z / m \Z$, we set:
$$
\alpha^{(j)} :=\frac{1}{m}\sum_{i=0}^{m-1}\omega^{-ij}\theta^i\cdot\alpha \in \mathfrak h^*; \qquad \mbox{and} \qquad
e_{\alpha}^{(j)} :=\frac{|\mathcal{O}_\alpha|}{m}\sum_{i = 0}^{m-1} \omega^{-ij}\theta^i(e_{\alpha}) \in \g_j. $$
\end{Definition}


Note that $\alpha^{(j)}$ and $e_{\alpha}^{(j)}$ depend only on $\mathcal{O}_\alpha$, up to non-zero scalars, and that
$\alpha=\sum_{i=0}^{m-1}\alpha^{(j)}$, with
 $\alpha^{(j)}|_{\h_i}=0$ for all $i \neq -j$.
For any $\sigma \in \Sigma$, we discussed in \S\ref{s_main} the reduction process from $\g$ to 
the Levi subalgebra $\g^{\Pi_\sigma}=\g^x  = \h \oplus \bigoplus_{\alpha \in \Phi_x} \g_{\alpha}$ (or rather 
its semisimple part $\mathfrak m=[\g^{\Pi_\sigma},\g^{\Pi_\sigma}]$),
where $x\in\mathfrak c$ and $\Phi_x\subset\Phi$ are as in \eqref{eq_reglochyp}-\eqref{phix}.
For any $\alpha \in \Phi_x$ with $\alpha^{(-1)}=\alpha\circ\rho\neq 0$ we have $|\mathcal{O}_\alpha|=m$, all $e_{\alpha}^{(j)}$ belong to $\mathfrak m$ and are non-zero, and $[e_{\alpha}^{(0)}, p_\sigma]=-\alpha(p_\sigma)e_{\alpha}^{(1)}\neq 0$.

From now on, we normalize \eqref{eq_form} in such a way that the long roots have squared length $2$ and tacitly identify $\h$ with $\h^*$ using \eqref{eq_form}. In particular, any $\alpha^{(j)}\in\mathfrak h_j$.

We will freely use the following result in \S\ref{ss_diagramAeven}-\S\ref{ss_section4} without any explicit mention.
\begin{Lemma}
The Cartan subalgebra $\mathfrak h\cap\mathfrak m$ of $\mathfrak m$ is $\theta$-stable and its degree $j$ component $\mathfrak h_j\cap\mathfrak m$ is generated by the $\alpha^{(j)}$'s running with $\alpha\in\Phi_x$.
\end{Lemma}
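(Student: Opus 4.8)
The plan is to establish the two assertions separately, the $\theta$-stability first and then the description of the graded pieces. For the $\theta$-stability I would note that $\h$ is $\theta$-stable by hypothesis (it is a homogeneous Cartan subalgebra) and that $\m=[\g^x,\g^x]$ is $\theta$-stable because $\g^x$ is $\theta$-stable and $\theta$ is an automorphism, hence $\theta([\g^x,\g^x])=[\theta(\g^x),\theta(\g^x)]=[\g^x,\g^x]$; the intersection $\h\cap\m$ is then $\theta$-stable as well. Consequently $\h\cap\m$ inherits the grading $\h\cap\m=\bigoplus_{j}(\h_j\cap\m)$, and the projection onto its degree $j$ part is the standard averaging operator $\pr_j=\tfrac1m\sum_{i=0}^{m-1}\omega^{-ij}\theta^i$, whose image, restricted to $\h\cap\m$, is exactly $\h_j\cap\m$.

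Next I would make precise the identification of $\h$ with $\h^*$ via the form \eqref{eq_form}. The key compatibility is that $\theta$ acts equivariantly: if $t_\alpha\in\h$ denotes the element dual to $\alpha\in\h^*$, then $\theta(t_\alpha)=t_{\theta\cdot\alpha}$. This follows from the $\theta$-invariance of \eqref{eq_form}, since for all $h\in\h$ one has $(\theta(t_\alpha),h)=(t_\alpha,\theta^{-1}h)=\alpha(\theta^{-1}h)=(\theta\cdot\alpha)(h)$, and the form is nondegenerate. Applying $\pr_j$ to $\alpha$ (viewed inside $\h$) therefore yields precisely the vector of Definition \ref{def:k}, i.e.\ $\pr_j(\alpha)=\tfrac1m\sum_{i}\omega^{-ij}(\theta^i\cdot\alpha)=\alpha^{(j)}$.

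For the spanning statement I would use that $\h\cap\m$ is a Cartan subalgebra of the semisimple Lie algebra $\m$, with root system $\Phi_x$, as recorded in the decomposition $\m=(\h\cap\m)\oplus\bigoplus_{\alpha\in\Phi_x}\g_\alpha$. Since $\m$ is semisimple its center vanishes, so the roots separate points of $\h\cap\m$; equivalently, under the identification above, the elements $t_\alpha$ with $\alpha\in\Phi_x$ span $\h\cap\m$. Applying the linear map $\pr_j$ to this spanning set and invoking the previous paragraph gives
\[
\h_j\cap\m=\pr_j(\h\cap\m)=\spn_{\C}\{\pr_j(\alpha)\mid\alpha\in\Phi_x\}=\spn_{\C}\{\alpha^{(j)}\mid\alpha\in\Phi_x\},
\]
which is the desired conclusion (and since $\h\cap\m$ is abelian, ``generated'' here means ``spanned'').

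I expect the only delicate point to be the equivariance $\theta(t_\alpha)=t_{\theta\cdot\alpha}$ of the second paragraph: it is exactly here that the $\theta$-invariance of \eqref{eq_form} enters, and it is what guarantees that the intrinsic projection $\pr_j$ on $\h$ agrees with the explicit averaging formula defining $\alpha^{(j)}$. Everything else is bookkeeping, once one uses that a Cartan subalgebra of a semisimple Lie algebra is spanned by the $t_\alpha$ attached to its roots.
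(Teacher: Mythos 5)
Your proof is correct and follows essentially the same route as the paper, which simply observes that $\h\cap\m$ is spanned by the vectors $[e_\alpha,e_{-\alpha}]$ (i.e.\ the $t_\alpha$, up to scalars) for $\alpha\in\Phi_x$ and leaves the averaging implicit. Your additional verification of the equivariance $\theta(t_\alpha)=t_{\theta\cdot\alpha}$ and the identification $\pr_j(t_\alpha)=t_{\alpha^{(j)}}$ just makes explicit the bookkeeping the paper omits.
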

\begin{proof}
It follows from the fact that $\mathfrak h\cap\mathfrak m$ is generated by the vectors $[e_\alpha,e_{-\alpha}]$, 
$\alpha\in\Phi_x$.
\end{proof}

We will now construct a representative lifting the sought reflection  by exponentiating certain linear combinations of the $e_{\alpha}^{(0)}$ with $\alpha\in\Phi_x$, in the cases of $\g$ simple of type $A$ with $\theta$ inner and $\g$ simple admitting a diagram automorphism $\theta$.
In \S \ref{ss_diagramAeven} and \S \ref{ss_innerA}, the root vectors $e_\alpha$ are normalized as  matrix units
and we follow the standard expressions and numbering of the simple roots for simple Lie algebras of type $A$.

\subsection{Diagram automorphism of $\mathfrak{sl}_{2n+1}(\C)$}
\label{ss_diagramAeven}
This is an opportunity to revisit the $m=1$ case  under the lens of periodically graded Lie algebras. Let  $\{\g, \theta, m\}= \{\mathfrak{sl}_{2n+1}(\C), \theta, 2\}$ with  $\theta$ the diagram automorphism of $\mathfrak{sl}_{2n+1}(\C)$, so $\g_0\cong \mathfrak{so}_{2n+1}(\C)$ acts on the the traceless second symmetric power $\g_1\cong \odot^2_0\C^{2n+1}$ irreducibly. 
We may thus conjugate $\theta$ so that it coincides with the Chevalley involution
of $\g=\mathfrak{sl}_{2n+1}(\C)$.

The standard Cartan subalgebra of $\g$ given by the diagonal traceless matrices is contained in $\g_1$, so it is a Cartan subspace $\mathfrak c$ of $\{\g, \theta, m\}$, and any $\alpha=\epsilon_k-\epsilon_\ell\in\Phi$ yields $\sigma=\alpha^{(-1)}=\alpha$. Now
\begin{align*}
\mathcal{O}_\alpha&=\left\{\alpha,-\alpha\right\}\;,\quad|\mathcal{O}_\alpha|=2\;,\\
e_{\alpha}^{(j)}&=e_\alpha+(-1)^j\theta(e_\alpha)\;,
\end{align*}
and $\g^{\Pi_\sigma}=\mathfrak c\oplus\g_\alpha\oplus\g_{-\alpha}$. It then follows that $\m=\C\alpha\oplus\g_\alpha\oplus\g_{-\alpha}\cong\mathfrak{sl}_2(\C)$ and $\mathfrak c\cap\mathfrak m=\C\alpha$,
and a simple computation in $\mathfrak{sl}_2(\mathbb C)$ 
yields
\begin{equation*}
[e_{\alpha}^{(0)},\alpha^{(1)}]=-2e_{\alpha}^{(1)}\;,\quad [e_{\alpha}^{(0)},e_{\alpha}^{(1)}]=2\alpha^{(1)}\;.
\end{equation*}
On the plane $\C\alpha^{(1)}\oplus e_{\alpha}^{(1)}\subset\mathfrak m_1$, the element $J:=\frac{1}{2}e_\alpha^{(0)}\in\mathfrak m_0$ acts as a complex structure and its exponential $\exp(\pi J) \in M_0$ as $-\id$. 
This is the sought lift of the reflection.
\subsection{Inner automorphisms of $\sl_{n+1}(\C)$}
\label{ss_innerA}
By the results of \cite[\S 7-\S8]{Vin} and \cite[\S 8]{MR1104219}, any inner automorphism $\theta: \g \to \g$ of $\g=\sl_{n+1}(\C)$ of positive rank is described, up to conjugation, by a crossed Kac diagram.
This is the affine Dynkin diagram of $\g$, where each node has an additional label $1$, if the node is crossed, or $0$, otherwise.
The order $m$ of $\theta$ is the number of crossed nodes, so the Kac diagrams we are considering have 
always at least one cross and we may assume w.l.o.g. that this is placed on the lowest root $\alpha_0$.
In summary, we have an ordered partition $(k_1,k_2,\ldots,k_N)$ of $n+1$, where $k_1$ is the number of consecutive crossed nodes counting  
from $\alpha_0$, $k_2$ is the number of consecutive uncrossed nodes counting from the simple root $\alpha_{k_1}$, etc. 
Thanks to \cite[Proposition 17]{Vin},
$[\g_0,\g_0]\cong\mathfrak{sl}_{k_2+1}(\C)\oplus \mathfrak{sl}_{k_4+1}(\C)\oplus\cdots$ and $\g_1$ is the sum of $m$ irreducible representations of $\g_0$ (one for each crossed node among the following representations
$\mathbb C$, $\mathbb C^{k_{2j}+1}$, $(\mathbb C^{k_{2j}+1})^*$, or $\mathbb C^{k_{2j}+1}\otimes (\mathbb C^{k_{2(j+1)}+1})^*$).

The associated automorphism is  $\theta=\Ad_{\exp( 2 \pi i x)}$, where $x$ is the element of the standard Cartan subalgebra $\mathfrak h$ of $\g$ with coordinates:
$$ \alpha_k(x) = \begin{cases} 1/m & \mbox{ if the node } k \mbox{ is crossed}\\
0 & \mbox{ otherwise} \end{cases}$$ w.r.t. simple roots $\alpha_k$, $1\leq k\leq n$. 
We now trade the automorphism with an equivalent one.
First, up to rescaling $x$ by a non-zero multiplicative factor, we may assume that
\begin{align*}
\exp(2\pi ix) &=\operatorname{diag}(\underbrace{1,\ldots,\omega^{k_1-1}}_{k_1 \text{ elements}},\underbrace{\omega^{k_1-1},\ldots,\omega^{k_1-1}}_{k_2 \text{ elements}},\underbrace{\omega^{k_1},\ldots,\omega^{k_1+k_3-1}}_{k_3 \text{ elements}},\ldots,\omega^{m-1}).
\end{align*}
It can be shown that the rank $r$ is equal to the minimum of the eigenvalue multiplicities thanks to \cite[\S 7]{Vin}.
 This then suggests to perform an additional conjugation, varying $x$ in $\g$ in such a way that $\theta$ is  determined by the block diagonal matrix 
\begin{align}
\label{eq:finaltheta}
\exp(2\pi ix)=\operatorname{diag}(\underbrace{P,\ldots,P}_{r \text{ times}},\omega^{k_1-1}\operatorname{Id}_{k_2+1-r},\omega^{k_1+k_3-1}\operatorname{Id}_{k_4+1-r},\ldots)\;,
\end{align}
where $P= \left( \begin{smallmatrix} 
0 & 1 & 0 & \cdots & 0 \\
0 & 0 & 1 & \cdots & 0 \\
\vdots & \vdots & \vdots & \vdots & \vdots \\
0 & 0 & 0 & 0 & 1 \\
1 & 0 & 0 & 0 & 0 
\end{smallmatrix} \right)$ is the $m\times m$ cyclic permutation matrix.
\vskip0.2cm\par
From now on we consider $\theta=\Ad_{\exp(2\pi ix)}$ as per \eqref{eq:finaltheta} and 
split the standard representation of $\g$ accordingly into $\mathbb C^{n+1}=\mathbb C^{rm}\oplus\mathbb C^{n+1-rm}$.
The Cartan subalgebra $\mathfrak h$ of $\g$ similarly decomposes into $\mathfrak h=\mathfrak t\oplus\mathfrak h'\oplus\mathbb C\mathcal I$, with $\mathfrak t$ (resp. $\mathfrak h'$) the standard Cartan subalgebra of $\mathfrak{sl}_{rm}(\mathbb C)$ (resp. $\mathfrak{sl}_{n+1-rm}(\mathbb C)$), and 
$$
\mathcal
I := \left( \begin{smallmatrix} (n+1 -rm) \Id_{rm} & 0 \\ 0 & -rm \Id_{n+1-rm} \end{smallmatrix} \right)\;.
$$
  We also set 
\begin{equation} 
\label{eq_s}
\begin{aligned}
\mathbf{s}&:=\operatorname{diag}(1,\omega,\omega^2,\ldots,\omega^{m-1})\;,\\
\mathbf{s}_\ell&:=\operatorname{diag}(\underbrace{0_m,\ldots,0_m}_{\ell-1 \text{ times}},\mathbf{s},\underbrace{0_m,\ldots,0_m}_{r-\ell \text{ times}},0_{n+1-rm})\;,
\end{aligned}
\end{equation}
for any $1\leq \ell\leq r$. The latter is the block diagonal matrix with a block $\mathbf{s}$ in position $(\ell, \ell)$ and zero blocks elsewhere.

\begin{Lemma}
The Cartan subalgebra $\mathfrak h$ of $\g$ is homogeneous.
More precisely $\mathfrak h'\oplus\mathbb C\mathcal I\subset \mathfrak h_0$ and $\mathfrak t=\bigoplus_{i \in \Z / m\Z} \mathfrak t_i$ with components 
$$\mathfrak t_{i}=
\begin{cases}
\mathbb C\mathbf{s}_1^{i}\oplus\cdots\mathbb\oplus\,\mathbb C\mathbf{s}_r^i\quad& \text{if}\;1\leq i\leq m-1;\\
\\
\mathbb C\mathbf{s}_1^{0}\oplus\cdots\mathbb\oplus\,\mathbb C\mathbf{s}_r^0\cap\;\mathfrak{sl}_{rm}(\mathbb C)\quad& \text{if}\;i=0.
\end{cases} 
$$
In particular, a Cartan subspace of $\{\g, \theta, m\}$ is given by $\mathfrak c=\mathfrak t_1=\mathbb C\mathbf{s}_1\oplus\cdots\mathbb\oplus\,\mathbb C\mathbf{s}_r$. 
\end{Lemma}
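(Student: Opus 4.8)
The plan is to compute directly how $\theta=\Ad_{g}$, with $g:=\exp(2\pi i x)$ as in \eqref{eq:finaltheta}, acts on the diagonal Cartan subalgebra $\h$ of $\g$. The first point is that $\h$ is $\theta$-stable: although $g$ is only block diagonal, with the cyclic permutation blocks $P$ on the $\C^{rm}$ part, conjugation by a permutation matrix merely permutes the diagonal entries, so $\Ad_{g}$ maps diagonal matrices to diagonal matrices (and preserves the trace). Hence $\Ad_{g}(\h)=\h$ and $\h$ is homogeneous, and it remains only to diagonalize $\Ad_{g}$ on $\h$ and read off the eigenvalues. Everything rests on a single elementary computation: since $P$ sends the $i$-th standard basis vector to the $(i-1)$-st cyclically, one checks at once that $P\,\mathbf{s}\,P^{-1}=\omega\,\mathbf{s}$, and therefore $P\,\mathbf{s}^{i}\,P^{-1}=(\omega\mathbf{s})^{i}=\omega^{i}\mathbf{s}^{i}$ for every $i$.

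From this the grading of $\t$ is immediate. Since $\mathbf{s}_\ell^{i}$ has the block $\mathbf{s}^{i}$ in position $(\ell,\ell)$ and $g$ has the block $P$ there, we get $\Ad_{g}(\mathbf{s}_\ell^{i})=\omega^{i}\mathbf{s}_\ell^{i}$, so $\mathbf{s}_\ell^{i}\in\g_i$. For fixed $\ell$ the matrices $\mathbf{s}^{0},\dots,\mathbf{s}^{m-1}$ are linearly independent, because the change of basis to the elementary diagonal units is the invertible Vandermonde (discrete Fourier) matrix $(\omega^{ki})_{0\le k,i\le m-1}$; hence they span the full diagonal of the $\ell$-th block, and running over $\ell=1,\dots,r$ the $\mathbf{s}_\ell^{i}$ span the diagonal of $\gl_{rm}(\C)$. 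As $\tr(\mathbf{s}^{i})=\sum_{k}\omega^{ki}$ vanishes precisely when $i\not\equiv 0$, the vectors $\mathbf{s}_\ell^{i}$ with $i\neq 0$ already lie in $\sl_{rm}(\C)$, whereas in degree $0$ one must intersect with the traceless matrices, recovering exactly the stated description of $\t_i$; the count $(r-1)+(m-1)r=rm-1=\dim\t$ shows nothing is omitted. For the other summands, the block of $g$ on $\C^{n+1-rm}$ is scalar on each piece, so $\Ad_{g}$ fixes $\h'$ pointwise and $\h'\subset\g_0$, while $\mathcal I$ is scalar on each of the two blocks $\C^{rm}$ and $\C^{n+1-rm}$, hence commutes with $g$ and $\C\mathcal I\subset\g_0$. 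This establishes that $\h$ is homogeneous with the asserted graded components.

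Finally I would argue that $\mathfrak c=\t_1=\bigoplus_{\ell=1}^{r}\C\mathbf{s}_\ell$ is a Cartan subspace: it is a subspace of $\g_1$ consisting of commuting semisimple (diagonal) elements, of dimension $r$. Since $r$ is the rank of $\{\g,\theta,m\}$ (being the minimal eigenvalue multiplicity of $g$, by \cite[\S 7]{Vin}), and every Cartan subspace has dimension $r$, the space $\mathfrak c$ cannot be properly contained in any abelian subspace of semisimple elements, so it is maximal. The only genuinely computational step is the identity $P\,\mathbf{s}\,P^{-1}=\omega\,\mathbf{s}$; granting it, the rest is the linear algebra of the Fourier basis. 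I expect the one place needing care to be the degree-zero component, where the traceless condition has to be imposed separately, together with the appeal to Vinberg's identification of $r$ with the minimal multiplicity, which is what certifies the maximality of $\mathfrak c$.
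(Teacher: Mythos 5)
Your proof is correct. The paper omits the proof of this lemma as straightforward, and your argument is exactly the intended direct verification: the identity $P\,\mathbf{s}\,P^{-1}=\omega\,\mathbf{s}$ yields the eigenspace decomposition of $\t$ under $\Ad_{\exp(2\pi i x)}$, the Vandermonde (discrete Fourier) basis together with the trace condition in degree zero accounts for the full dimension $rm-1$, and the maximality of $\t_1$ as a Cartan subspace follows from the identification of $r$ with the rank (the minimal eigenvalue multiplicity) asserted just before the lemma.
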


The proof is straightforward and we omit it. If $\beta=\epsilon_i-\epsilon_j\in\Phi$ is any positive root of $\mathfrak g$ (i.e., $1\leq i<j\leq n+1$), then $\sigma=\beta\circ\rho=0$ if and only if $i,j>rm$, whereas:
\begin{itemize}
	\item[(i)] If $(\ell-1) m+1\leq i<j\leq \ell m $ for some $1\leq \ell\leq r$, or instead
	$(\ell-1) m+1\leq i\leq \ell m $ and $j>rm$ for some $1\leq \ell\leq r$,
	then  
%
$\Pi_\sigma=\langle \mathbf{s}_1, \dots, \mathbf{s}_{\ell - 1}, \mathbf{s}_{\ell + 1}, \dots, \mathbf{s}_r \rangle$;
	\item[(ii)] If $(\textit{k}-1) m+1\leq i\leq \textit{k}\, m $ and $(\ell-1) m+1\leq j\leq \ell m $ for some
	$1\leq \textit{k}<\ell\leq r$, then
$\Pi_\sigma= \langle \omega^{-i}\mathbf{s}_k +\omega^{-j}\mathbf{s}_{\ell}, \mathbf{s}_1, \dots,  \mathbf{s}_{k-1} , \mathbf{s}_{k +1} , \dots, 
 \mathbf{s}_{\ell-1} , \mathbf{s}_{\ell+1}, \dots , \mathbf{s}_r \rangle$.
\end{itemize}
We are now ready to state and prove the following.
\begin{Theorem}
\label{thm:innersl}
Let $\sigma\in\Sigma$ with the corresponding hyperplane $\Pi_\sigma$ in $\mathfrak c$. Then there are two possibilities:

\begin{enumerate}
	\item $\Pi_\sigma=\langle \mathbf{s}_1, \dots, \mathbf{s}_{\ell - 1}, \mathbf{s}_{\ell + 1}, \dots, \mathbf{s}_r \rangle$ for some $1\leq \ell\leq r$ and the lift of a complex reflection $s\in W$ with $\Pi_s=\Pi_\sigma$ is provided by 
$$
\exp\big(\sum_{k=1}^{m-1}a_k e_{\alpha_{(\ell-1) m+1}+\cdots+\alpha_{(\ell-1) m+k}}^{(0)}\big)\;,
$$
\vskip0.1cm\par\noindent
where the coefficients $a_1,\ldots,a_{m-1}\in\mathbb C$ are  determined in Proposition \ref{prop:uniquesolution} later on;
\item $\Pi_\sigma= \langle \omega^{-i}\mathbf{s}_k +\omega^{-j}\mathbf{s}_{\ell}, \mathbf{s}_1, \dots,  \mathbf{s}_{k-1} , \mathbf{s}_{k +1} , \dots, 
 \mathbf{s}_{\ell-1} , \mathbf{s}_{\ell+1}, \dots , \mathbf{s}_r \rangle$ for some $1\leq \textit{k}<\ell\leq r$,
$(\textit{k}-1) m+1\leq i\leq \textit{k}\, m $, $(\ell-1) m+1\leq j\leq \ell m $, 
and the lift of a complex reflection $s\in W$ with $\Pi_s=\Pi_\sigma$ is provided by
$$
\exp\big(\frac{\pi}{2} (e_\alpha^{(0)}  - e_{-\alpha}^{(0)})\big)\;\;\;\;\;\;\;\;\;\;\;\;\;\;\;\;\;\;\;\;\;\;
$$
where $\alpha = \epsilon_{i}-\epsilon_j$.
\end{enumerate}
\end{Theorem}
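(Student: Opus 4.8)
The plan is to verify, in each of the two cases, that the proposed exponential is a genuine element of $N_{M_0}(\C p_\sigma)$ inducing a nontrivial element of the rank-one little Weyl group $W(\m,\theta_{\mid\m})$ constructed in \S\ref{s_main}; via the injection $W(\m,\theta_{\mid\m})\hookrightarrow W$ used in the proof of Theorem \ref{Thm1}, this produces a reflection $s\in\Refl(W)$ with $\Pi_s=\Pi_\sigma$. Two of the three requirements defining such a reflection are immediate and common to both cases. First, since $e_{\beta_k}^{(0)}$ and $e_{\pm\alpha}^{(0)}$ lie in $\m_0$ (by the discussion preceding Definition \ref{def:k}), their exponentials lie in $M_0\subset M\subset Z_G(\Pi_\sigma)$ and hence fix $\Pi_\sigma$ pointwise. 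Second, because $M_0$ preserves $\mathfrak c=\Pi_\sigma\oplus\C p_\sigma$ while fixing $\Pi_\sigma$, it automatically normalizes the line $\C p_\sigma$. Thus the entire content reduces to computing the scalar by which each exponential acts on $\C p_\sigma$ and checking that it is nontrivial.

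For the first case I would identify $\m=[\g^{\Pi_\sigma},\g^{\Pi_\sigma}]$ with the copy of $\sl_m(\C)$ occupying the $\ell$-th diagonal block of size $m$, carrying the grading induced by $\theta_{\mid\m}=\Ad_P$ with $P$ the cyclic permutation matrix of \eqref{eq:finaltheta}, and with Cartan line $\C p_\sigma=\C\mathbf{s}_\ell$. A direct computation shows that $e_{\beta_k}^{(0)}$ equals the $k$-th power of the cyclic shift inside the block, so that $\{e_{\beta_k}^{(0)}\}_{k=1}^{m-1}$ is a basis of the traceless circulant matrices, i.e.\ of $\m_0$, and $M_0$ is the maximal torus of $M$ centralizing $P$. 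Since $\mathbf{s}_\ell\in\mathfrak c=\g_1$, one has $\Ad_P(\mathbf{s}_\ell)=\theta(\mathbf{s}_\ell)=\omega\,\mathbf{s}_\ell$, so that $P$ (up to the center of $M$) normalizes $\C p_\sigma$ and acts by the primitive root $\omega$, exhibiting a generator of $W(\m,\theta_{\mid\m})\cong\Z/m\Z$. As $\exp\colon\m_0\to M_0$ is surjective onto the torus, $P$ is the exponential of a unique $X=\sum_{k=1}^{m-1}a_k\,e_{\beta_k}^{(0)}\in\m_0$; passing to the eigenbasis of the shift turns $\exp(X)=P$ into the Fourier-inversion system $\exp\!\big(\sum_k a_k\omega^{jk}\big)=\omega^{j}$, whose unique solution is recorded in Proposition \ref{prop:uniquesolution}. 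The resulting $\exp(X)$ therefore acts on $\mathbf{s}_\ell$ by $\omega$, giving the desired reflection.

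The second case is modelled on the symmetric example of \S\ref{ss_diagramAeven}. Here $\alpha=\epsilon_i-\epsilon_j$ links the $k$-th and $\ell$-th blocks, $\sigma=\alpha^{(-1)}$ cuts out the stated hyperplane, and the complementary line is $\C p_\sigma=\C\,\alpha^{(1)}$, since $\sigma(\alpha^{(1)})=(\alpha^{(-1)},\alpha^{(1)})\neq 0$. I would first note that for distinct $t,t'$ the orbit roots $\theta^t\!\cdot\alpha$ are never summable in type $A$, so $e_\alpha^{(0)}$ and $e_{-\alpha}^{(0)}$ are sums of commuting root vectors and, together with $h:=[e_\alpha^{(0)},e_{-\alpha}^{(0)}]$, span a diagonally embedded copy of $\sl_2(\C)$ inside $\m_0$, in which $X:=e_\alpha^{(0)}-e_{-\alpha}^{(0)}$ is the compact generator. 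Exactly as in \S\ref{ss_diagramAeven}, the relation $[e_\alpha^{(0)},p_\sigma]=-\alpha(p_\sigma)\,e_\alpha^{(1)}$ (and its companion for $e_{-\alpha}^{(0)}$) shows that $\ad_X$ equips the relevant $\sl_2$-submodule of $\m_1$ through $p_\sigma$ with a complex structure, so that $\exp\big(\tfrac{\pi}{2}(e_\alpha^{(0)}-e_{-\alpha}^{(0)})\big)$ acts as a half-turn, hence as $-\id$ on $\C p_\sigma$. This yields an order-two reflection $s$ with $\Pi_s=\Pi_\sigma$.

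The structural input --- the rank-one reduction to $\m$ and the injection into $W$ --- is entirely supplied by \S\ref{s_main}, so the real work is computational and convention-dependent. In case (1) the main obstacle is the explicit determination and well-definedness of the coefficients $a_k$, i.e.\ inverting the discrete Fourier transform to solve $\exp(X)=P$ inside the torus $M_0$; this is the role of Proposition \ref{prop:uniquesolution}, and care is needed with the branch of the logarithm and with the normalization placing $\mathbf{s}_\ell$ in degree $+1$. In case (2) the delicate point is bookkeeping the $\omega$-phases: correctly identifying $p_\sigma\propto\alpha^{(1)}$ relative to the description of $\Pi_\sigma$ in terms of $\omega^{-i}\mathbf{s}_k+\omega^{-j}\mathbf{s}_\ell$, and confirming that the $\sl_2$-triple is normalized so that $\exp\big(\tfrac{\pi}{2}X\big)$ produces exactly $-\id$ (rather than another root of unity) on $\C p_\sigma$.
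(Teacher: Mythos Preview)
Your approach matches the paper's, but two points need fixing.

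First, in case~(1) you write ``identify $\mathfrak{m}=[\g^{\Pi_\sigma},\g^{\Pi_\sigma}]$ with the copy of $\sl_m(\C)$ occupying the $\ell$-th diagonal block''. That identification is wrong: the paper computes $\Phi_x$ and finds $\mathfrak{m}\cong\sl_{n+1-(r-1)m}(\C)$, strictly larger whenever $n+1>rm$. The paper then passes to the $\theta$-stable \emph{subalgebra} $\mathfrak{s}\cong\sl_m(\C)$ generated by the root spaces of that block; it is $\mathfrak{s}$, not $\mathfrak{m}$, whose degree-zero piece is the circulant torus. (Relatedly, the equation to solve is $\exp(X)=\varepsilon P$ with $\varepsilon^m=(-1)^{m+1}$, since $\det P=(-1)^{m+1}$; you defer this to Proposition~\ref{prop:uniquesolution}, but it should be acknowledged up front.)

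Second, your framing ``because $M_0$ preserves $\mathfrak{c}=\Pi_\sigma\oplus\C p_\sigma$ while fixing $\Pi_\sigma$, it automatically normalizes the line $\C p_\sigma$'' is circular. Since $\Pi_\sigma=\z(\g^x)_1$ meets $\mathfrak{m}$ trivially and $M_0\cdot p_\sigma\subset\mathfrak{m}_1$, the statement ``$M_0$ preserves $\mathfrak{c}$'' is \emph{equivalent} to ``$M_0$ normalizes $\C p_\sigma$'', which is exactly the claim at stake; and in case~(2), where $\mathfrak{m}_0$ is not abelian, it is simply false for generic elements of $M_0$. The correct order is the one your computations actually follow: show directly that the specific exponential sends $p_\sigma$ to a scalar multiple of itself (via $\Ad_P(\mathbf{s}_\ell)=\omega\,\mathbf{s}_\ell$ in case~(1), via the $\sl_2$-rotation in case~(2)), and only then conclude membership in $N_{M_0}(\C p_\sigma)$. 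So the defect is in the preamble rather than the substance; drop the blanket claim about $M_0$ and let the explicit calculations carry the argument, as the paper does.
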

\noindent
We first focus on the proof of Theorem \ref{thm:innersl} for case (1).
 Using the above discussions and \eqref{phix}, we readily get 

\begin{equation*}
\begin{aligned}
\Phi_x=&\left\{\epsilon_p-\epsilon_q\mid
(\ell-1) m+1\leq p,q\leq \ell m, p\neq q\right\}\bigcup \\
&\left\{\pm(\epsilon_p-\epsilon_q)\mid
(\ell-1) m+1\leq p\leq \ell m, q>rm\right\}\bigcup \\
&\left\{\epsilon_p-\epsilon_q\mid
p,q>rm, p\neq q\right\}
\end{aligned}
\end{equation*}
so the semisimple Lie algebra $\mathfrak m=[\g^{\Pi_\sigma}, \g^{\Pi_\sigma}]$ is isomorphic to $\mathfrak{sl}_{n+1-(r-1)m}(\mathbb C)$. Inside $\mathfrak m$, we can focus on the Lie subalgebra generated by the root spaces that correspond to the first contribution in $\Phi_x$: it is a $\theta$-stable subalgebra isomorphic to $\mathfrak{sl}_m(\mathbb C)$,  and it includes the Cartan subspace $\mathfrak c\cap \mathfrak m=\mathbb C \mathbf{s}_\ell$ of $\mathfrak m$.

With some temporary abuse of notation, we will thus work with the periodically graded semisimple Lie algebra $\{\mathfrak s:=\mathfrak{sl}_m(\mathbb C),\theta:=\Ad_P,m\}$, the standard Cartan subalgebra $\mathfrak h$ with basis 
$\left\{\mathbf{s}, \mathbf{s}^2, \ldots, \mathbf{s}^{m-1}\right\}$, Cartan subspace $	\mathfrak{c}:=\mathbb C \mathbf{s}$, simple roots
$\alpha_k=\epsilon_k-\epsilon_{k+1}$, $1\leq k\leq m-1$.
\begin{Lemma}
\label{lemma:usefulinner}
\hfill\vskip0.2cm\par\noindent
\begin{itemize}
	\item[(i)] $\mathfrak s_0$ has basis $P, P^2, \ldots, P^{m-1}$;
	\item[(ii)] $\theta$-orbits of roots are parametrized by the root height modulo $m$: if $\alpha=\alpha_1+\cdots+\alpha_k$ has $\operatorname{ht}(\alpha)=k\geq 1$, then
\begin{align*}
\;\;\;\;\;\;\;\;\mathcal{O}_\alpha&=\left\{\alpha,\theta^{-1}\cdot\alpha=\alpha_2+\cdots+\alpha_{k+1},\ldots,\theta\cdot\alpha=\alpha_0+\cdots+\alpha_{k-1}\right\}\;,\\
|\mathcal{O}_\alpha|&=m\;,\quad e_{\alpha}^{(0)}=P^k\;.
\end{align*}
\end{itemize}
\end{Lemma}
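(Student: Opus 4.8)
The plan is to reduce both assertions to elementary matrix identities controlled by the single relation between the shift matrix $P$ and the diagonal matrix $\mathbf{s}$. First I would record, from $Pe_j=e_{j-1}$ and $\mathbf{s}e_j=\omega^{j-1}e_j$ (all indices read mod $m$), the commutation rule $P\mathbf{s}P^{-1}=\omega\mathbf{s}$; hence $\theta(\mathbf{s}^k)=\omega^k\mathbf{s}^k$, confirming $\mathbf{s}^k\in\mathfrak{s}_k$ and in particular $\mathfrak{c}=\C\mathbf{s}\subset\mathfrak{s}_1$. The second identity I would establish once and for all is the action of $\theta=\Ad_P$ on the matrix units $e_{\epsilon_i-\epsilon_j}=E_{ij}$: a direct computation gives $PE_{ij}P^{-1}=E_{i-1,j-1}$, so that $\theta^{\,p}(E_{ij})=E_{i-p,j-p}$ and, on roots, $\theta\cdot(\epsilon_i-\epsilon_j)=\epsilon_{i-1}-\epsilon_{j-1}$ (indices mod $m$, with $\epsilon_0=\epsilon_m$).

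For part (i), I would identify $\mathfrak{s}_0=\{x\in\sl_m(\C)\mid [P,x]=0\}$ with the centralizer of $P$. Since $P$ has the $m$ distinct eigenvalues $1,\omega,\dots,\omega^{m-1}$, it is regular semisimple and its centralizer in $\gl_m(\C)$ is the commutative algebra $\C[P]$ with basis $\Id,P,\dots,P^{m-1}$. Intersecting with the traceless matrices removes exactly the $\Id$-component, because $\tr(P^k)=0$ for $1\le k\le m-1$ while $\tr(\Id)=m$; this leaves $\mathfrak{s}_0=\langle P,\dots,P^{m-1}\rangle$.

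For part (ii), the orbit description is immediate from the index-shift rule: the $\theta$-orbit of $\epsilon_i-\epsilon_j$ is $\{\epsilon_{i+t}-\epsilon_{j+t}\mid t\in\Z/m\Z\}$, which depends only on the residue $(j-i)\bmod m\in\{1,\dots,m-1\}$ and, since $\epsilon_{i+t}-\epsilon_{j+t}=\epsilon_i-\epsilon_j$ forces $t\equiv0$, has exactly $m$ elements; thus height modulo $m$ parametrizes the orbits and $|\mathcal{O}_\alpha|=m$. Taking $\alpha=\alpha_1+\cdots+\alpha_k=\epsilon_1-\epsilon_{k+1}$ and reading images off the shift rule recovers the listed representatives, using $\epsilon_m-\epsilon_1=\alpha_0$ so that $\theta\cdot\alpha=\epsilon_m-\epsilon_k=\alpha_0+\cdots+\alpha_{k-1}$ and $\theta^{-1}\cdot\alpha=\epsilon_2-\epsilon_{k+2}=\alpha_2+\cdots+\alpha_{k+1}$. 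Finally, because $|\mathcal{O}_\alpha|=m$ the prefactor in Definition \ref{def:k} equals $1$, and
\[
e_\alpha^{(0)}=\sum_{p=0}^{m-1}\theta^{\,p}(E_{1,k+1})=\sum_{p=0}^{m-1}E_{1-p,\,k+1-p}=\sum_{q\in\Z/m\Z}E_{q,\,q+k}=P^k,
\]
the last step being the matrix-unit expansion $P^k=\sum_q E_{q,q+k}$.

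All the steps are elementary; the only delicate point is the consistent bookkeeping of the cyclic indices---in particular fixing the direction in which $P$ shifts the standard basis and checking that the affine label $\alpha_0$ corresponds to $\epsilon_m-\epsilon_1$---so that the computed orbit members agree with the stated expressions. I do not expect any genuine obstacle beyond this.
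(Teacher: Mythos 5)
Your proposal is correct and follows essentially the same route as the paper: part (i) via the centralizer of the regular semisimple element $P$ spanned by its powers, and part (ii) via the index-shift rule $\theta\cdot(\epsilon_i-\epsilon_j)=\epsilon_{i-1}-\epsilon_{j-1}$ (equivalently $\theta\cdot\alpha_k=\alpha_{k-1}$, $\theta\cdot\alpha_0=\alpha_{m-1}$) together with $\theta(e_\alpha)=e_{\theta\cdot\alpha}$. You merely spell out the matrix-unit computation $e_\alpha^{(0)}=\sum_q E_{q,q+k}=P^k$, which the paper leaves implicit.
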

\begin{proof}
First $\mathfrak s_0=\mathfrak s^P$ contains all the powers $P, P^2, \ldots, P^{m-1}$ of $P$. 
Assertion (i) follows then from the fact that $P$ is a regular semisimple element of $\mathfrak s$ and the above powers are linearly independent. Claim (ii) follows directly from the equations $\theta\cdot\alpha_0=\alpha_{m-1}$, 
$\theta\cdot \alpha_{k}=\alpha_{k-1}$  for all $1\leq k\leq m-1$,
$\theta(e_{\alpha})=e_{\theta\cdot\alpha}$ for all roots $\alpha$.
\end{proof}
In view of (ii) of Remark \ref{rem:useful} and of Lemma \ref{lemma:usefulinner}, we seek coefficients $a_1,\ldots,a_{m-1}\in\mathbb C$ such that
the adjoint action of the exponential of the element $
\sum_{k=1}^{m-1}a_k P^k\in\mathfrak s_0$ is $\theta=\Ad_P$.
 In other words, we look for solutions of the equation 
$
\exp ( \sum_{k=1}^{m-1}a_k P^k ) = \varepsilon P
$,
where an additional constant $\varepsilon$ satisfying $\varepsilon^m=(-1)^{m+1}$ has been introduced to guarantee the unit determinant.
 Now $P$ and $\mathbf{s}$ are semisimple matrices with the same eigenvalues and therefore conjugated, so the equation 
can be traded with $\exp({\sum_{k=1}^{m-1}a_k \mathbf{s}^k})=(-1)^{m+1}\mathbf{s}$, which is easier to study and reads 
\begin{equation}
\label{eq:bigsystem}
\begin{aligned}
a_1+a_2+\cdots+a_{m-1}&=\log\big(\varepsilon\big)\\
a_1\omega+a_2\omega^2+\cdots+a_{m-1}\omega^{m-1}&=\log\big(\varepsilon\omega\big)\\
a_1\omega^2+a_2\omega^4+\cdots+a_{m-1}\omega^{2(m-1)}&=\log\big(\varepsilon\omega^2\big)\\
\vdots\;\;\;\;\;\;\;\;\;\;\;\;\;\;\;\;\;\;\;\;\;\;\;\;\;\;\;\;\;\;&\;\;\;\;\;\vdots\\
\vdots\;\;\;\;\;\;\;\;\;\;\;\;\;\;\;\;\;\;\;\;\;\;\;\;\;\;\;\;\;\;&\;\;\;\;\;\vdots\\
a_1\omega^{m-1}+a_2\omega^{2(m-1)}+\cdots+a_{m-1}\omega^{(m-1)(m-1)}&=\log\big(\varepsilon\omega^{m-1}\big)
\end{aligned}
\end{equation}
(Given $z\in\mathbb C$, the symbol $\log(z)$ denotes any complex number such that $\exp(\log(z))=z$.)
This system appears to be overdetermined, since it has $m$ equations in $(m-1)$ unknowns, so it is convenient to  slightly reformulate it and single out the additional constraint explicitly.
\begin{Proposition}
\label{prop:uniquesolution}
The system \eqref{eq:bigsystem} is equivalent to the square system $A\vec{a}=\log(\vec{\omega})$, with
\vskip0.2cm\par\noindent
\begin{multline*}
A={\scriptsize\begin{pmatrix}
1 & 1 & 1 & \cdots & 1 \\
1& \omega & \omega^2 & \cdots & \omega^{m-1} \\
1 & \omega^2 & (\omega^2)^2 & \cdots & (\omega^{m-1})^{2}\\
\vdots & \vdots & \vdots &  & \vdots \\
1 & \omega^{m-1} & (\omega^2)^{m-1} & \cdots & (\omega^{m-1})^{m-1} 
\end{pmatrix}}\;,\;\;
\vec{a}={\scriptsize
\begin{pmatrix}
a_0\\
a_1\\
\vdots\\
a_{m-2}\\
a_{m-1}
\end{pmatrix}}\;,\;\;
\log(\vec{\omega})={\scriptsize
\begin{pmatrix}
\log(\varepsilon)\\
\log(\varepsilon\omega)\\
\vdots\\
\log(\varepsilon\omega^{m-2})\\
\log(\varepsilon\omega^{m-1})
\end{pmatrix}}\;,
\end{multline*}
\vskip0.2cm\par\noindent
under the additional constraint $a_0=0$. In particular, it has a unique solution $\vec{a}=\frac{1}{m}\overline A \log(\vec{\omega})$.
\begin{proof}
The matrix $A$ is readily seen to be invertible with inverse $A^{-1}=\frac{1}{m}\overline A$ by the classical formulae for determinant and inverse of a square matrix of Vandermonde type. The solution to \eqref{eq:bigsystem} thus exists and is unique, provided the first entry of $\overline A \log(\vec{\omega})$ vanishes, namely
\begin{multline*}
0=\log(\varepsilon)+
\log(\varepsilon\omega)+\cdots+
\log(\varepsilon\omega^{m-2})+
\log(\varepsilon\omega^{m-1})\Longleftrightarrow\\
1=\varepsilon^m 1\cdot\omega\cdot\cdots\cdot\omega^{m-2}\cdot\omega^{m-1}\;,
\end{multline*}
which is true because the product of all roots of unity of order $m$ is equal to $(-1)^{m+1}$.
\end{proof}
\end{Proposition}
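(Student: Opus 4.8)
The plan is to separate the purely formal reformulation from the two substantive points, invertibility of $A$ and vanishing of the first coordinate of the solution. For the reformulation I would simply note that if one adjoins a new unknown $a_0$ and adds to the $r$-th equation of \eqref{eq:bigsystem} (with $r=0,\dots,m-1$ indexing the rows) the term $a_0\,\omega^{r\cdot 0}=a_0$, one obtains exactly the $r$-th row $\sum_{k=0}^{m-1}a_k\omega^{rk}=\log(\varepsilon\omega^r)$ of the square system $A\vec a=\log(\vec\omega)$; conversely, setting $a_0=0$ erases the adjoined terms and returns \eqref{eq:bigsystem}. Thus the equivalence under the constraint $a_0=0$ is a tautology, and the content lies in the square system. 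Here $A$ has entries $A_{r,k}=\omega^{rk}=(\omega^k)^r$, so it is a Vandermonde matrix in the nodes $1,\omega,\dots,\omega^{m-1}$, which are pairwise distinct since $\omega$ is a primitive $m$-th root of unity; hence $\det A\neq 0$.

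To identify $A^{-1}$ I would invoke the orthogonality relation $\sum_{r=0}^{m-1}\omega^{r(k-k')}=m\,\delta_{k,k'}$, valid because this finite geometric sum vanishes unless $k\equiv k'\pmod m$. This yields $A\cdot\tfrac1m\overline A=\mathrm{Id}$ and therefore $A^{-1}=\tfrac1m\overline A$, so the square system admits the unique solution $\vec a=\tfrac1m\overline A\log(\vec\omega)$. It remains to verify that its zeroth entry vanishes, for then the constraint $a_0=0$ is automatic and \eqref{eq:bigsystem} is itself uniquely solvable.

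The zeroth row of $\overline A$ is identically $1$, so $a_0=\tfrac1m\sum_{r=0}^{m-1}\log(\varepsilon\omega^r)$; exponentiating gives $\exp(m\,a_0)=\prod_{r=0}^{m-1}\varepsilon\omega^r=\varepsilon^m\prod_{r=0}^{m-1}\omega^r$. Since $\varepsilon^m=(-1)^{m+1}$ and the product of all $m$-th roots of unity equals $(-1)^{m+1}$ (the constant term of $z^m-1$ up to sign), this is $\exp(m\,a_0)=1$.

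The main obstacle is precisely at this junction: the relation $\exp(m\,a_0)=1$ only forces $a_0\in\tfrac{2\pi i}{m}\Z$, and the displayed ``$\Longleftarrow$'' relating $\sum_r\log(\varepsilon\omega^r)=0$ to $\prod_r\varepsilon\omega^r=1$ is not valid for arbitrary branches of the logarithm. I would resolve this by stressing that the numbers $\log(\varepsilon\omega^r)$ are not prescribed: the true goal is an element $\sum_k a_k\mathbf s^k$ exponentiating to $\varepsilon\mathbf s$, and one is free to choose each branch. As $\prod_r\varepsilon\omega^r$ equals $1$ as a genuine complex number, every choice of branches makes $\sum_r\log(\varepsilon\omega^r)$ a fixed element of $2\pi i\Z$, and adjusting a single branch by a suitable multiple of $2\pi i$ normalizes it to $0$. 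Conceptually $a_0=0$ is nothing but the condition that $\sum_k a_k\mathbf s^k$ be traceless, i.e.\ lie in $\mathfrak{sl}_m(\C)$, which matches the determinant-one condition $\prod_r\varepsilon\omega^r=1$ on the target; with the branches so normalized, $\vec a=\tfrac1m\overline A\log(\vec\omega)$ is the desired unique solution.
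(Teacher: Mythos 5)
Your proof is correct and follows essentially the same route as the paper's: invertibility of the Vandermonde matrix $A$ with $A^{-1}=\tfrac1m\overline A$ (you via the orthogonality relation for roots of unity, the paper via the classical Vandermonde formulae), and the consistency condition reduced to $\varepsilon^m\prod_{r}\omega^r=1$. Your additional care about the branches of $\log$ --- observing that exponentiation only yields $a_0\in\tfrac{2\pi i}{m}\Z$ and that one must use the freedom in choosing $\log(\varepsilon\omega^r)$ to normalize $a_0=0$ --- is a legitimate tightening of the paper's displayed ``$\Longleftrightarrow$'', which is stated loosely but is justified by the paper's convention that $\log(z)$ denotes \emph{any} preimage under $\exp$.
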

The proof of Theorem \ref{thm:innersl} for the case (1) is completed and we now turn to the case (2). Using again \eqref{phix}, we see

\begin{equation*}
\begin{aligned}
\Phi_x=&\left\{\pm(\epsilon_p-\epsilon_{q(p)})\mid
(\textit{k}-1) m+1\leq p\leq \textit{k}\, m\right\}\bigcup \\
&\left\{\epsilon_p-\epsilon_q\mid
p,q>rm, p\neq q\right\}\;,
\end{aligned}
\end{equation*}
where
	$1\leq \textit{k}<\ell\leq r$ and $q(p)$ is the positive integer uniquely determined by $p$ by means of the equations
	 $(\ell-1) m+1\leq q(p)\leq \ell m$, $\overline{q(p)}=\overline{p+j-i}$ in $\Z /m \Z$.
Then $\mathfrak m\cong m\,\mathfrak{sl}_{2}(\mathbb C)\oplus \mathfrak{sl}_{n+1-mr}(\mathbb C)$ with Cartan subspace $\mathfrak c\cap \mathfrak m=\mathbb C (\omega^{-i}\mathbf{s}_{\textit{k}}-\omega^{-j}\mathbf{s}_{\ell})$, and 
its ideal $\mathfrak s:=m\,\mathfrak{sl}_{2}(\mathbb C)$, which is generated by the root spaces corresponding to the first contribution in $\Phi_x$, is 
	$\theta$-stable and includes $\mathfrak c\cap \mathfrak m$.
With a little abuse of notation, we thus consider the
	periodically graded semisimple Lie algebra $\{\mathfrak s:=m\,\mathfrak{sl}_{2}(\mathbb C),\theta,m\}$.
\begin{Lemma}
\label{lemma:usefulouter}
The automorphism $\theta:\mathfrak s\to\mathfrak s$ cyclically permutes the simple ideals of $\mathfrak s$, in particular it is an outer automorphism of $\mathfrak s$. 
It follows that
\begin{itemize}
	\item[(i)] there are  two $\theta$-orbits 	
	 of roots of $\mathfrak s$, i.e., the orbit of all positive roots 
	$$\mathcal O_+:=\left\{\epsilon_p-\epsilon_{q(p)}\mid (\textit{k}-1) m+1\leq p\leq \textit{k}\, m \right\}$$ and the orbit of all negative roots $\mathcal O_-=-\mathcal O_+$, and both orbits have cardinality $m$;
	\item[(ii)] if $\imath:\mathfrak{sl}_2(\mathbb C)\to\mathfrak s$ is the natural diagonal embedding, then
the element $e_{\alpha}^{(0)}\in\mathfrak{s}_0$ associated to the $\theta$-orbit $\mathcal O_\alpha$ of a root $\alpha$ is given by
$$
e_{+}^{(0)}=\imath \begin{pmatrix}0 & 1 \\ 0 & 0\end{pmatrix}\;\;\text{if}\;\mathcal O_\alpha=\mathcal O_+\;,\quad
e_{-}^{(0)}=\imath \begin{pmatrix}0 & 0 \\ 1 & 0\end{pmatrix}\;\;\text{if}\;\,\mathcal O_\alpha=\mathcal O_-\;.
$$
\end{itemize}
\end{Lemma}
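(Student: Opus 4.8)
The plan is to unwind the definitions of $\mathfrak s = m\,\mathfrak{sl}_2(\mathbb C)$ and $\theta = \Ad_P$ directly, treating the four simple ideals $\mathfrak{sl}_2(\mathbb C)$ as being indexed cyclically and tracking how $\theta$ moves them. First I would make explicit how $\theta$ acts on $\mathfrak s$: since $\mathfrak s$ sits inside the ambient $\mathfrak{sl}_{n+1}(\mathbb C)$ and $\theta$ is conjugation by the block-cyclic matrix built from $P$, conjugation by $P$ sends the root space indexed by $p$ to the one indexed by $p+1 \pmod m$ within each block, and the defining equations $(\textit{k}-1)m+1 \le p \le \textit{k}\,m$ together with $\overline{q(p)} = \overline{p+j-i}$ show that $\theta$ cyclically permutes the $m$ copies of $\mathfrak{sl}_2(\mathbb C)$. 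Because $m \ge 2$ and a nontrivial cyclic permutation of simple ideals cannot be realized as an inner automorphism of a semisimple Lie algebra (inner automorphisms preserve each simple ideal), this immediately gives that $\theta|_{\mathfrak s}$ is outer.

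For part (i), I would observe that the root system of $\mathfrak s$ is a disjoint union of $m$ copies of the $A_1$ root system $\{\pm\alpha\}$, one for each simple ideal, and $\Phi_x$ in the case-(2) description is exactly the set of positive and negative roots $\pm(\epsilon_p - \epsilon_{q(p)})$ running over the $m$ values of $p$. Since $\theta$ cyclically permutes the ideals and $\theta\cdot\alpha = \alpha\circ\theta^{-1}$ carries positive roots to positive roots (the sign of $\epsilon_p - \epsilon_{q(p)}$ is determined by whether $p < q(p)$, and the index shift $p \mapsto p+1$ preserves this relationship for the fixed shift $j-i$), the two orbits are precisely $\mathcal O_+$ and $\mathcal O_- = -\mathcal O_+$, each of cardinality $m$ because $\theta$ has order $m$ and acts on the $m$ ideals by a full cycle.

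For part (ii), I would compute $e_{+}^{(0)}$ and $e_{-}^{(0)}$ straight from Definition \ref{def:k}. With $|\mathcal O_\alpha| = m$, the formula $e_\alpha^{(0)} = \frac{m}{m}\sum_{i=0}^{m-1}\theta^i(e_\alpha)$ is the $\theta$-average of a root vector over its orbit. Choosing the root vectors as matrix units, the sum $\sum_i \theta^i(e_\alpha)$ places one upper-triangular (resp. lower-triangular) $\mathfrak{sl}_2$-generator in each of the $m$ simple ideals, which is exactly the image under the diagonal embedding $\imath$ of $\left(\begin{smallmatrix}0&1\\0&0\end{smallmatrix}\right)$ (resp. $\left(\begin{smallmatrix}0&0\\1&0\end{smallmatrix}\right)$). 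The main thing to check carefully is the normalization of the root vectors so that the $\theta$-translates $\theta^i(e_\alpha) = e_{\theta^i\cdot\alpha}$ assemble coherently into a single diagonal copy of an $\mathfrak{sl}_2$-generator with no stray scalar factors; granting the matrix-unit normalization fixed earlier in \S\ref{ss_innerA}, this is a direct bookkeeping computation.

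The step I expect to be the genuine obstacle is pinning down the combinatorics of the map $p \mapsto q(p)$ and verifying that $\theta$ really does act as a single $m$-cycle (rather than a product of shorter cycles) on the simple ideals; once the cyclic structure is established, the outerness and the orbit count follow formally, and the explicit shape of $e_{\pm}^{(0)}$ is a routine unwinding of the averaging definition.
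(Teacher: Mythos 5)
Your proposal is correct and follows essentially the same route as the paper, whose one-line proof likewise rests on the index-shift formula $\theta\cdot(\epsilon_p-\epsilon_{q(p)})=\epsilon_{p-1}-\epsilon_{q(p-1)}$ (read cyclically within each block) together with $\theta(e_\alpha)=e_{\theta\cdot\alpha}$ for matrix-unit root vectors; you simply spell out the resulting single $m$-cycle on the ideals, the outerness, the two orbits of size $m$, and the averaging computation for $e_{\pm}^{(0)}$. The only blemishes are cosmetic: ``four simple ideals'' should read ``$m$ simple ideals'', and the shift direction ($p\mapsto p+1$ versus $p\mapsto p-1$) is just the convention $\theta\cdot\alpha=\alpha\circ\theta^{-1}$ at work.
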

\begin{proof}
It follows immediately from the equations $\theta\cdot(\epsilon_p-\epsilon_{q(p)})=\epsilon_{p-1}-\epsilon_{q(p)-1}=\epsilon_{p-1}-\epsilon_{q(p-1)}$ (if $p=(\textit{k}-1) m+1$ then $p-1$ has to be understood as $\textit{k}\,m$ and similarly if $q(p)=(\ell-1) m+1$) 
and 
$\theta(e_{\alpha})=e_{\theta\cdot\alpha}$ for all roots $\alpha$.
\end{proof}
The triple $\{\mathfrak s:=m\,\mathfrak{sl}_{2}(\mathbb C),\theta,m\}$ fits thus into the discussion carried out in \cite[\S1.2, \S2.2]{Vin}, with $\{\mathfrak{sl}_2(\mathbb C),\id,1\}$ as the associated simple component. It follows (see, e.g., \cite[pag. 479]{Vin}) that the little Weyl group of the triple, considered as a linear group, is isomorphic to the Weyl group of the simple component. 
The latter is $\mathbb Z_2$ and the sought lift of the reflection is thus $\exp( \pi J )$ with $J:=\frac{1}{2}\big(e_{+}^{(0)}-e_{-}^{(0)}\big)\in\mathfrak s_0$, arguing as in \S \ref{ss_diagramAeven}. 
This concludes the discussion of case (2) and the proof of Theorem \ref{thm:innersl}.

\subsection{Diagram automorphisms of $\mathfrak{so}_{2n+2}(\C)$, $\mathfrak{sl}_{2n}(\C)$, $E_6$ ($m=2$) and $\mathfrak{so}_{8}(\C)$ ($m=3$)}
\label{ss_section4}
We consider  periodically graded Lie algebras $\{\g, \theta, m\}$ arising from diagram automorphisms $\theta$ of simple Lie algebras $\g$, except for the case $\g=\mathfrak{sl}_{2n+1}(\C)$ already examined in \S\ref{ss_diagramAeven}. 
We recollect them in the following Table \ref{tablediagram}, which also includes the Vinberg $\theta$-group $(G_0,\g_1)$ and the rank $r=\dim\mathfrak c$ (see \cite[Summary Table]{MR1309681}).
\begin{table}[h]
\[
\begin{array}{|c|c|c|c|c} \hline
\g & m & (G_0,\g_1) & r \\ \hline\hline
 \mathfrak{so}_{2n+2}(\C)& 2  & (\mathrm{SO}_{2n+1}(\mathbb C), \C^{2n+1})& 1\\ \hline

 \mathfrak{sl}_{2n}(\C)&2  & (\mathrm{Sp}_{2n}(\mathbb C),\Lambda^2_0\C^{2n}) & n-1\\ \hline

  E_6& 2 & (F_4, \C^{26})  & 2\\ \hline
 \mathfrak{so}_{8}(\C)& 3 & (G_2,\C^7) &1\\ \hline
 \end{array}
 \]
 \caption{}
 \label{tablediagram}
 \end{table}
\par\noindent
We fix a Cartan subalgebra $\h$ of $\g$ and a system of simple roots $\left\{\alpha_1,\ldots,\alpha_N\right\}$ as in \cite[\S 6.7]{MR1104219} and use the uniform description of the simply-laced simple Lie algebras $\g$ and their diagram automorphisms $\theta$ given in \cite[\S 7.8-7.9]{MR1104219}.
 In particular, the subalgebra $\mathfrak h$ is $\theta$-stable, the root vectors $e_\alpha$ satisfy the Chevalley-type relations \cite[7.8.5]{MR1104219}, the automorphism $\theta$ permutes the simple roots of $\g$, and $\theta(e_\alpha):=e_{\theta\cdot\alpha}$ for all $\alpha\in\Phi$.
  Finally,
$\alpha^{(j)}$ and $e_{\alpha}^{(j)}$ in Definition \ref{def:k} are in agreement with the definitions
given in \cite[\S7.9]{MR1104219}, with the  exception that our $\alpha^{(j)}$ has an additional overall factor.
We also recall that we already agreed to tacitly identify $\h$ with $\h^*$ using  \eqref{eq_form}.
\begin{Lemma} The graded component $\mathfrak h_1$ of $\h=\bigoplus_{i \in \Z / m\Z} \h_i$ has dimension equal to the number of $\theta$-orbits $\mathcal O_\alpha$ of simple roots $\alpha$ of cardinality $|\mathcal O_\alpha|>1$.
It is a Cartan subspace $\mathfrak c$ of $\{\g, \theta, m\}$, with basis given in the following Table  \ref{tablebasis}.
\begin{table}[h]
\[
\begin{array}{|c|c|c|c} \hline
\g & m & {\text Basis\; of}\;\mathfrak c\;\; \\ \hline\hline
 \mathfrak{so}_{2n+2}(\C)& 2 &\alpha_{n+1}^{(1)}=\epsilon_{n+1}  \\ \hline

 \mathfrak{sl}_{2n}(\C)& 2 &\alpha_k^{(1)}=\frac{1}{2}\big(\epsilon_k-\epsilon_{k+1}-\epsilon_{2n-k}+\epsilon_{2n-k+1}\big)\;\;\text{for}\;\;1\leq k\leq n-1   \\ \hline

  E_6& 2 &\alpha_1^{(1)}=\frac{1}{2}\big(\epsilon_1-\epsilon_{2}-\epsilon_{5}+\epsilon_{6}\big),\;\alpha_2^{(1)}=\frac{1}{2}\big(\epsilon_2-\epsilon_{3}-\epsilon_{4}+\epsilon_{5}\big) \\ \hline
\mathfrak{so}_{8}(\C)& 3 &\alpha_1^{(1)}=\frac{1}{3}\big(\epsilon_1-\epsilon_2-\epsilon_3+(\omega-\omega^{2})\epsilon_4\big) \\ \hline
 \end{array}
 \]
 \caption{}
 \label{tablebasis}
 \end{table}
\end{Lemma}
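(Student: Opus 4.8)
The plan is to prove the three claims in turn: the dimension formula for $\h_1$, the fact that $\h_1$ is a Cartan subspace, and the explicit bases of Table~\ref{tablebasis}. Throughout I use that $\h$ is $\theta$-stable, hence graded, that $\theta$ permutes the simple roots $\alpha_1,\dots,\alpha_N$, and that under the identification $\h\cong\h^*$ the component $\h_1$ is the $\omega$-eigenspace of $\theta$ on $\spn\{\alpha_1,\dots,\alpha_N\}$, spanned by the projections $\alpha_k^{(1)}$.

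For the dimension, the key observation is that $m\in\{2,3\}$ is prime, so every $\theta$-orbit $\mathcal{O}_\alpha$ of simple roots has cardinality $1$ or $m$; thus ``$|\mathcal{O}_\alpha|>1$'' is equivalent to ``$|\mathcal{O}_\alpha|=m$''. If $\mathcal{O}_\alpha$ is trivial then $\theta\cdot\alpha=\alpha$ and $\alpha^{(1)}=\tfrac{1}{m}\big(\sum_{i=0}^{m-1}\omega^{-i}\big)\alpha=0$. If $|\mathcal{O}_\alpha|=m$, then $\alpha,\theta\cdot\alpha,\dots,\theta^{m-1}\cdot\alpha$ are $m$ distinct, hence linearly independent, simple roots, so $\alpha^{(1)}\neq 0$; moreover a second representative $\beta=\theta^k\cdot\alpha$ satisfies $\beta^{(1)}=\omega^k\alpha^{(1)}$, so each nontrivial orbit contributes a single line. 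As the permutation representation decomposes as a direct sum over orbits and the cyclic permutation on a size-$m$ orbit has $\omega$ as an eigenvalue of multiplicity exactly one, I conclude $\dim\h_1=\#\{\,\text{nontrivial orbits}\,\}$, with spanning set $\{\alpha^{(1)}\}$ indexed by one representative per nontrivial orbit.

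To see that $\h_1$ is a Cartan subspace, note that $\h_1\subset\h\subset\g$ is abelian and consists of semisimple elements of $\g$. Any such subspace lies (by Zorn's lemma) in a maximal one, i.e.\ in a Cartan subspace, and all Cartan subspaces have dimension $r$ by their $G_0$-conjugacy (property~(ii) of \S\ref{s_intro}); hence $\dim\h_1\leq r$. Comparing the dimension formula above, case by case, with the rank $r$ listed in Table~\ref{tablediagram} (from \cite{MR1309681})---one nontrivial orbit for $\mathfrak{so}_{2n+2}(\C)$ and $\mathfrak{so}_8(\C)$, $n-1$ for $\mathfrak{sl}_{2n}(\C)$, two for $E_6$---gives $\dim\h_1=r$ in every case, so $\h_1$ equals the Cartan subspace containing it, i.e.\ $\h_1=\crt$.

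Finally, the bases in Table~\ref{tablebasis} follow from a direct evaluation of $\alpha^{(1)}=\tfrac{1}{m}\sum_{i=0}^{m-1}\omega^{-i}\theta^i\cdot\alpha$ on one representative of each nontrivial orbit, after translating the permutation action of $\theta$ into the $\epsilon$-coordinates of \cite[\S6.7]{MR1104219}. For example, for $\mathfrak{so}_{2n+2}(\C)$ the only nontrivial orbit is $\{\alpha_n,\alpha_{n+1}\}$, with $\theta$ exchanging $\alpha_{n+1}=\epsilon_n+\epsilon_{n+1}$ and $\theta\cdot\alpha_{n+1}=\epsilon_n-\epsilon_{n+1}$, so $\alpha_{n+1}^{(1)}=\tfrac{1}{2}(\alpha_{n+1}-\theta\cdot\alpha_{n+1})=\epsilon_{n+1}$; the remaining three cases are computed identically, the only new feature being that for $\mathfrak{so}_8(\C)$ the root of unity $\omega$ is primitive of order three, and the simplifications $\omega^{-1}=\omega^2$, $\omega^{-2}=\omega$, $\omega+\omega^2=-1$ produce the coefficient $\omega-\omega^2$ on $\epsilon_4$. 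The only genuinely delicate point is the bookkeeping of conventions---the identification $\h\cong\h^*$ and the exact permutation of simple roots effected by $\theta$---needed to land the projections in the stated normalization; the one piece of real geometric input, that an abelian subspace of semisimple elements of the correct dimension is automatically a Cartan subspace, is already supplied by the dimension count together with the conjugacy of Cartan subspaces.
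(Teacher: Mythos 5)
Your proposal is correct and follows essentially the same route as the paper: identify $\h_1$ as the span of the projections $\alpha_k^{(1)}$ of the simple roots, observe that these vanish on trivial $\theta$-orbits and are proportional within a nontrivial orbit, and conclude that $\h_1$ is a Cartan subspace by comparing its dimension with the rank $r$ of Table \ref{tablediagram}. The paper compresses the last step into the phrase ``by dimensional reasons''; your explicit justification (an abelian subspace of semisimple elements sits inside a Cartan subspace, and all Cartan subspaces have dimension $r$) is exactly the intended argument.
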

\begin{proof}
 The component $\mathfrak h_1$ is generated by $\alpha_1^{(1)},\ldots,\alpha_N^{(1)}$, but 
\begin{itemize}
	\item[(i)] $\alpha^{(1)}_k=0$ whenever $|\mathcal O_{\alpha_k}|=1$,
	\item[(ii)] $\alpha_k^{(1)}$ is proportional to $\alpha_{\ell}^{(1)}$ whenever $\alpha_k$ and $\alpha_\ell$ belong to the same $\theta$-orbit.
\end{itemize}
This leads to the vectors shown in Table  \ref{tablebasis}, which are easily seen to be linearly independent, and $\h_1$ is a Cartan subspace by dimensional reasons.
\end{proof}

\begin{Proposition}
\label{lem:familyroots}
A root $\alpha\in\Phi$ satisfies $\alpha^{(-1)}\neq 0$  if and only if 
its $\theta$-orbit $\mathcal O_\alpha$ has cardinality $|\mathcal O_\alpha|>1$, in turn, if and only if it appears in the following Table
 \ref{tablerestricted}.
\begin{table}[h]
\[
\begin{array}{|c|c|c|c} \hline
\g & m &\alpha \\ \hline\hline
 \mathfrak{so}_{2n+2}(\C)& 2 &\pm\epsilon_i\pm\epsilon_{n+1}\;\;\text{where}\;\;i\neq n+1  \\ \hline

 \mathfrak{sl}_{2n}(\C)& 2 &\;\;\;\;\;\epsilon_i-\epsilon_j\;\;\text{where}\;\;j\neq i,2n+1-i \\ \hline

  E_6& 2 &\begin{gathered} \epsilon_i-\epsilon_j\;\;\text{where}\;\; 1\leq i,j\leq 6, j\neq i, 7-i
	\\  
	\alpha:=\tfrac{1}{2}\big((\lambda_1\epsilon_1+\cdots+\lambda_6\epsilon_{6})\pm\sqrt2 \epsilon_7\big)\;\;\text{where\;all}\;\;
	\lambda_k=\pm 1, \sum\lambda_k=0, \theta\cdot\alpha\neq \alpha
	\end{gathered} \\ 
	 \hline
 \mathfrak{so}_{8}(\C)& 3& \text{All}\;\;\pm\epsilon_i\pm\epsilon_j\;\;\text{except}\;\;\pm(\epsilon_1+\epsilon_2), \pm(\epsilon_1+\epsilon_3),\pm(\epsilon_2-\epsilon_3)\\ \hline
 \end{array}
 \]
 \caption{}
 \label{tablerestricted}
 \end{table}
\vskip-0.5cm\par
Furthermore:
\begin{itemize}
	\item[(i)] If $\g= \mathfrak{so}_{2n+2}(\C)$, all the roots of $\g$ in Table 
 \ref{tablerestricted} give rise to the same hyperplane $\Pi_\sigma$;
	\item[(ii)] If $\g=\mathfrak{so}_8(\mathbb C)$, all the roots of $\g$ in Table 
 \ref{tablerestricted} give rise to the same hyperplane $\Pi_\sigma$;
	\item[(iii)] If $\g=E_6$, the second type of roots of $\g$ in Table 
 \ref{tablerestricted} gives rise to the same collection of hyperplanes $\Pi_\sigma$ as those arising from the first type of roots. 
\end{itemize}
\end{Proposition}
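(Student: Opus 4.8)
The plan is to treat the stated equivalences and the three supplementary items (i)--(iii) separately. I first record that $\alpha^{(-1)} = \alpha\circ\rho = \sigma$ is exactly the restriction of $\alpha$ to $\crt = \h_1$, so that ``$\alpha^{(-1)}\neq 0$'' means precisely ``$\sigma\in\Sigma$''. For the equivalence $\alpha^{(-1)}\neq 0 \Leftrightarrow |\mathcal{O}_\alpha|>1$ I would argue uniformly. If $\theta\cdot\alpha = \alpha$, then
\begin{equation*}
\alpha^{(-1)} = \tfrac{1}{m}\sum_{i=0}^{m-1}\omega^{i}\,\theta^i\cdot\alpha = \tfrac{\alpha}{m}\sum_{i=0}^{m-1}\omega^{i} = 0,
\end{equation*}
so $|\mathcal{O}_\alpha| = 1$ forces $\alpha^{(-1)} = 0$. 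For the converse I would use that $\theta$ is a diagram automorphism, whence $m\in\{2,3\}$ is prime and every orbit has cardinality $1$ or $m$. Writing $\alpha = \sum_{j}\alpha^{(j)}$ with $\theta\cdot\alpha^{(j)} = \omega^j\alpha^{(j)}$, the reality of the root $\alpha$ gives $\alpha^{(m-1)} = \overline{\alpha^{(1)}}$. Since $m\leq 3$, the only homogeneous components of $\alpha$ besides $\alpha^{(0)}$ are $\alpha^{(1)}$ and $\alpha^{(m-1)}$, so $\alpha^{(-1)} = \alpha^{(m-1)} = 0$ forces $\alpha^{(1)} = \alpha^{(m-1)} = 0$ as well; thus $\alpha = \alpha^{(0)}$ is $\theta$-fixed and $|\mathcal{O}_\alpha| = 1$. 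This proves the first pair of equivalences for all four algebras at once.

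To identify the orbit-size-$>1$ roots with Table \ref{tablerestricted}, I would feed in the explicit permutation action of $\theta$ on $\Phi$ from the conventions of \cite[\S7.8--7.9]{MR1104219}. For $\so_{2n+2}(\C)$ the automorphism fixes $\epsilon_1,\dots,\epsilon_n$ and sends $\epsilon_{n+1}\mapsto -\epsilon_{n+1}$, so a root has nontrivial orbit exactly when it involves $\epsilon_{n+1}$; for $\sl_{2n}(\C)$ one has $\theta\cdot(\epsilon_i-\epsilon_j) = \epsilon_{2n+1-j}-\epsilon_{2n+1-i}$, which is fixed precisely when $j = 2n+1-i$; the $E_6$ and $\so_8(\C)$ cases are a direct inspection of the triality and of the involution on the listed roots. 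In each case the complement of the $\theta$-fixed roots is exactly the list in Table \ref{tablerestricted}. Items (i) and (ii) are then immediate: both $\so_{2n+2}(\C)$ and $\so_8(\C)$ have rank $r=1$, so $\crt$ is a line and $\Hyp_\Sigma$ can only consist of the single hyperplane $\{0\}$; every $\sigma\in\Sigma$ therefore has $\Pi_\sigma = \{0\}$, which is exactly (i) and (ii).

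The one computation with content is (iii), where $\g = E_6$ has rank $r = 2$, so $\crt$ is a plane and a hyperplane $\Pi_\sigma = \ker\sigma$ is a line determined by the class of $\sigma = \alpha|_\crt$ up to scalar. Using $\alpha^{(-1)} = \tfrac12(\alpha - \theta\cdot\alpha)$ and pairing with the basis $\{\alpha_1^{(1)},\alpha_2^{(1)}\}$ of $\crt$ via the form \eqref{eq_form}, I would compute the coordinate pair $\big((\alpha,\alpha_1^{(1)}),(\alpha,\alpha_2^{(1)})\big)$ first for the first-type roots $\epsilon_i - \epsilon_j$ and then for the second-type roots $\tfrac12\big((\lambda_1\epsilon_1+\cdots+\lambda_6\epsilon_6)\pm\sqrt{2}\,\epsilon_7\big)$, and read off the finitely many resulting lines. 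The assertion is that the set of lines produced by the second type coincides with the set produced by the first. I expect this to come down to a finite check over the admissible sign patterns $(\lambda_1,\dots,\lambda_6)$ with $\sum_k\lambda_k = 0$ and $\theta\cdot\alpha\neq\alpha$, and I anticipate the main subtlety to be the bookkeeping of the seventh coordinate $\epsilon_7$ (together with the factor $\sqrt{2}$) under $\theta$, since this is precisely what makes the second-type roots genuinely different from the $A$-type roots. This is the step where a sign or normalization slip would be easiest to make, and where the uniform argument of the first paragraph offers no assistance.
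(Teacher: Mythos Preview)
Your proposal is correct and matches the paper's proof almost verbatim: both use the conjugation identity $\alpha^{(1)}=\overline{\alpha^{(-1)}}$ together with $m\le 3$ for the equivalence, a case-by-case inspection of the explicit $\theta$-action on roots to obtain Table~\ref{tablerestricted}, the observation $r=1$ for items (i)--(ii), and a finite check for item (iii). The only cosmetic difference is that the paper phrases (iii) by noting that for $m=2$ the line orthogonal to $\Pi_\sigma$ in $\mathfrak c$ is $\mathbb C\alpha^{(1)}$, which slightly streamlines the comparison of hyperplanes but is equivalent to your coordinate-pair computation.
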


\begin{proof}

If $|\mathcal O_\alpha|=1$, then $\alpha^{(-1)}=0$. If $|\mathcal O_\alpha|>1$, we assume by contradiction that $\alpha^{(-1)}=0$.
Then $\alpha^{(+1)}=\overline{\alpha^{(-1)}}=0$, so that $\alpha=\alpha^{(0)}$ and $\alpha$ is invariant under $\theta$, which is not possible. Since the condition $\alpha^{(-1)}\neq 0$ reads $(\alpha,\mathfrak c)\neq 0$, it is straightforward to use Table  \ref{tablebasis} and the list of all roots in \cite[\S 6.7]{MR1104219} to get Table \ref{tablerestricted}. If $m=2$, then $(\alpha,\mathfrak c)=(\alpha^{(-1)},\mathfrak c)=(\alpha^{(1)},\mathfrak c)$, whence the line orthogonal to $\Pi_\sigma$ in $\mathfrak c$ is $\mathbb C\alpha^{(1)}$ and we check that 
the first and second type of roots for $E_6$ in Table \ref{tablerestricted} give rise to the same collection of hyperplanes $\Pi_\sigma$. 
Finally, if $r=1$, it is clear that all the roots give rise to the same (trivial) hyperplane $\Pi_\sigma$.
\end{proof}

\begin{Theorem}
Let $\{\g, \theta, m\}$ be a periodically graded Lie algebra arising from a diagram automorphism $\theta$ of a simple Lie algebra $\g$ as in Table  \ref{tablediagram}.
Let $\sigma\in\Sigma$ with associated hyperplane $\Pi_\sigma$ in $\mathfrak c$ and $\alpha\in\Phi$ such that
$\sigma=\alpha\circ\rho$.
 Thanks to Proposition \ref{lem:familyroots}, we may assume without loss of generality that $\alpha$ is as in Table  \ref{tablerestrictedchangename}.
  Then the lift of a reflection $s\in W$ with $\Pi_s=\Pi_\sigma$ is given by $\exp\big(\frac{\pi}{2} (e_{\alpha}^{(0)}+e_{-\alpha}^{(0)})\big)$.
\begin{table}[h]
\[
\begin{array}{|c|c|c|c} \hline
\g & m &\alpha \\ \hline\hline
 \mathfrak{so}_{2n+2}(\C)& 2 &\alpha_{n+1}=\epsilon_n+\epsilon_{n+1} \\ \hline

 \mathfrak{sl}_{2n}(\C)& 2 &\;\;\;\;\;\alpha_i+\cdots+\alpha_{j-1}=\epsilon_i-\epsilon_j\;\;\text{where}\;\;i<j, j\neq 2n+1-i \\ \hline

  E_6& 2 &\begin{gathered} \alpha_i+\cdots+\alpha_{j-1}=\epsilon_i-\epsilon_j\;\;\text{where}\;\; 1\leq i<j\leq 6, j\neq 7-i
	\end{gathered} \\ 
	 \hline
 \mathfrak{so}_{8}(\C)& 3& \alpha_1=\epsilon_1-\epsilon_2\\ \hline
 \end{array}
 \]
 \caption{}
 \label{tablerestrictedchangename}
 \end{table}

\end{Theorem}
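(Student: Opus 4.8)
The plan is to reduce, for each restricted root $\sigma$, to the rank-one situation already treated in \S\ref{ss_diagramAeven} and \S\ref{ss_innerA}, and then to exhibit the claimed group element as the exponential of an explicit complex structure. First I would identify the $\theta$-orbit $\mathcal O_\alpha=\{\alpha,\theta\cdot\alpha,\dots,\theta^{m-1}\cdot\alpha\}$ and the $\theta$-stable subalgebra $\mathfrak s\subseteq\m=[\g^{\Pi_\sigma},\g^{\Pi_\sigma}]$ generated by the root spaces $\g_{\pm\theta^i\cdot\alpha}$. Inspecting Table \ref{tablerestrictedchangename} case by case, the roots in $\mathcal O_\alpha$ are pairwise orthogonal and no sum of two of them is a root: for $\mathfrak{so}_{2n+2}(\C)$ one has $\theta\cdot\alpha=\epsilon_n-\epsilon_{n+1}\perp\alpha$; for $\mathfrak{sl}_{2n}(\C)$ and $E_6$ the four indices occurring in $\alpha$ and $\theta\cdot\alpha$ are distinct; and for $\mathfrak{so}_8(\C)$ the orbit is $\{\alpha_1,\alpha_3,\alpha_4\}$, the three mutually orthogonal legs of $D_4$. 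Hence in every case $\mathfrak s\cong m\,\mathfrak{sl}_2(\C)$ is a sum of $m$ commuting copies of $\mathfrak{sl}_2(\C)$ cyclically permuted by $\theta$, exactly the configuration of Lemma \ref{lemma:usefulouter}, and it contains the Cartan subspace $\crt\cap\m=\C\,\alpha^{(1)}$.

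Next I would carry out the computation inside $\mathfrak s$, generalising \S\ref{ss_diagramAeven}. Writing $e_k,f_k,h_k$ for the generators of the $m$ factors (so that $e_\alpha^{(0)}=\sum_k e_k$, $e_{-\alpha}^{(0)}=\sum_k f_k$, $e_\alpha^{(1)}-e_{-\alpha}^{(1)}=\sum_k\omega^{-k}(e_k-f_k)$, and $\alpha^{(1)}\propto\sum_k\omega^{-k}h_k$), the Chevalley relations of \cite[7.8.5]{MR1104219} yield, for a suitable normalisation of $\alpha^{(1)}$,
\begin{equation*}
[e_\alpha^{(0)}+e_{-\alpha}^{(0)},\, \alpha^{(1)}] = -2\,(e_\alpha^{(1)}-e_{-\alpha}^{(1)}),
\qquad
[e_\alpha^{(0)}+e_{-\alpha}^{(0)},\, e_\alpha^{(1)}-e_{-\alpha}^{(1)}] = 2\,\alpha^{(1)}.
\end{equation*}
Thus $J:=\tfrac12(e_\alpha^{(0)}+e_{-\alpha}^{(0)})\in\mathfrak s_0\subseteq\m_0$ acts as a complex structure on the plane $\C\,\alpha^{(1)}\oplus\C\,(e_\alpha^{(1)}-e_{-\alpha}^{(1)})\subset\m_1$, so that $\exp(\pi J)=\exp\big(\tfrac\pi2(e_\alpha^{(0)}+e_{-\alpha}^{(0)})\big)\in M_0\subseteq G_0$ restricts to $-\id$ on $\crt\cap\m=\C\,\alpha^{(1)}$.

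Since $M\subset Z_G(\Pi_\sigma)$, established in \S\ref{s_main}, this element fixes $\Pi_\sigma$ pointwise; it therefore normalises $\crt=\Pi_\sigma\oplus\C\,\alpha^{(1)}$ and its image $s\in W$ is nontrivial of order two, hence a reflection in $\Refl(W)$ with $\Pi_s=\Pi_\sigma$, as claimed. This simultaneously gives a constructive version of the inclusion $\Hyp_\Sigma\subset\Hyp_W$ for these families.

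The delicate point is the sign. The same computation carried out with the naive convention $[e_\alpha,e_{-\alpha}]=h_\alpha$ and $\theta(e_\gamma)=e_{\theta\cdot\gamma}$ produces $\ad_{e_\alpha^{(0)}+e_{-\alpha}^{(0)}}^2=+4$ on $\C\,\alpha^{(1)}$, i.e.\ a hyperbolic rather than complex structure, for which $\exp(\tfrac\pi2(\cdot))$ is not even of finite order; in that normalisation it is $e_\alpha^{(0)}-e_{-\alpha}^{(0)}$ that is the complex structure, matching the minus sign appearing in \S\ref{ss_innerA} under its matrix-unit normalisation. The plus sign in the statement is forced by the specific normalisation of the Chevalley basis and of the action of $\theta$ on root vectors in \cite[7.8.5, \S7.9]{MR1104219}, under which $e_{-\alpha}^{(0)}$ carries the opposite sign; verifying that this is indeed the operative convention --- equivalently, pinning down $[e_\alpha^{(0)},e_{-\alpha}^{(0)}]$ and its bracket with $\alpha^{(1)}$ against that reference --- is the main bookkeeping obstacle, the remainder being a routine $\mathfrak{sl}_2(\C)$ calculation identical in spirit to \S\ref{ss_diagramAeven} and \S\ref{ss_innerA}.
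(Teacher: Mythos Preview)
Your argument is correct and very close in spirit to the paper's, with one organisational difference worth noting. You reduce \emph{uniformly} in all four cases to the $\theta$-stable subalgebra $\mathfrak s\cong m\,\mathfrak{sl}_2(\C)$ generated by the root spaces $\g_{\pm\theta^i\cdot\alpha}$, relying on the fact that in simply-laced type, orthogonality of the roots in $\mathcal O_\alpha$ already forces all brackets $[\g_{\pm\theta^i\cdot\alpha},\g_{\pm\theta^j\cdot\alpha}]$ ($i\neq j$) to vanish. The paper instead treats the two rank-one cases $\mathfrak{so}_{2n+2}(\C)$ and $\mathfrak{so}_8(\C)$ by direct computation with the commutation rules of \cite[Remark 7.9(b)]{MR1104219}, and for $\mathfrak{sl}_{2n}(\C)$ and $E_6$ reduces to the $\theta$-stable $\mathfrak{sl}_4(\C)$ spanned by the root spaces with indices in $\{i,j,2n+1-i,2n+1-j\}$ (respectively $\{i,j,7-i,7-j\}$), observes that $\{\mathfrak{sl}_4(\C),\theta,2\}\cong\{\mathfrak{so}_6(\C),\theta,2\}$, and invokes the $\mathfrak{so}_{2n+2}(\C)$ case already handled. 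Your $2\,\mathfrak{sl}_2(\C)$ sits inside this $\mathfrak{sl}_4(\C)$, so both reductions land on the same $J=\tfrac12(e_\alpha^{(0)}+e_{-\alpha}^{(0)})$ and the same complex-structure calculation; your route is slightly more uniform, the paper's makes the ambient Levi structure and the link between the $A$ and $D$ families more visible. Your caveat about the sign is well placed: the plus sign is indeed an artefact of the Kac normalisation \cite[7.8.5,\,\S7.9]{MR1104219}, and the paper handles it exactly as you suggest, by computing $[e_\alpha^{(0)},e_{-\alpha}^{(1)}]$ and $[e_{-\alpha}^{(0)},e_\alpha^{(1)}]$ directly from those relations.
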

\vskip0.2cm\par\noindent
\begin{proof} We will freely make use of the uniform commutation relations in \cite[Remark 7.9(b)]{MR1104219}.
We depart with the cases of rank $r=1$. 
If $\g=\mathfrak{so}_{2n+2}(\mathbb C)$, we have
\begin{equation*}
\begin{aligned}
[e_{\pm\alpha}^{(0)},\alpha^{(1)}]&=\mp(\alpha^{(1)},\alpha)e_{\pm\alpha}^{(1)}=\mp e_{\pm\alpha}^{(1)}\;,\\
[e_{\pm\alpha}^{(0)},e_{\pm\alpha}^{(1)}]&=0\;.
\end{aligned}
\end{equation*}
Setting $J:=\frac{1}{2} \big(e_{\alpha}^{(0)}+e_{-\alpha}^{(0)}\big)
$, we see $[J,\alpha^{(1)}]=-\frac{1}{2}\big(e_{\alpha}^{(1)}-e_{-\alpha}^{(1)}\big)$ and
\begin{equation*}
\begin{aligned}
[J,e_{\alpha}^{(1)}-e_{-\alpha}^{(1)}]&=\frac{1}{2}[e_{\alpha}^{(0)}+e_{-\alpha}^{(0)},e_{\alpha}^{(1)}-e_{-\alpha}^{(1)}]=-\frac{1}{2}[e_{\alpha}^{(0)},e_{-\alpha}^{(1)}]
+\frac{1}{2}[e_{-\alpha}^{(0)},e_{\alpha}^{(1)}]\\
&=
2\alpha^{(1)}\;,
\end{aligned}
\end{equation*}
so that $J$ acts as a complex structure on the plane $\C\alpha^{(1)}\oplus \mathbb C(e_{\alpha}^{(1)}-e_{-\alpha}^{(1)})$. The claim follows.
If $\g=\mathfrak{so}_{8}(\mathbb C)$, the argument is very close and we only record the main steps: we have
\begin{align*}
[e_{\pm\alpha}^{(0)},3\alpha^{(1)}]&=
\mp3(\alpha^{(1)},\alpha)e_{\pm\alpha}^{(1)}=\mp 2e_{\pm\alpha}^{(1)}\\
[J,e_{\alpha}^{(1)}-e_{-\alpha}^{(1)}]&=-\frac{1}{2}[e_{\alpha}^{(0)},e_{-\alpha}^{(1)}]+\frac{1}{2}[e_{-\alpha}^{(0)},e_{\alpha}^{(1)}]=3\alpha^{(1)}
\end{align*}
and the claim follows again.

\vskip0.2cm\par
We now turn to $\g=\mathfrak{sl}_{2n}(\mathbb C)$.
Here $\alpha=\epsilon_i-\epsilon_j$ where $i<j$, $j\neq 2n+1-i$, and it is also convenient to introduce $\widetilde\alpha=\epsilon_i-\epsilon_{2n+1-j}$.
We consider the reduction process to rank $r=1$.
 A root $\beta=\epsilon_k-\epsilon_\ell\in\Phi$ belongs to $\Phi_x$ as in \eqref{phix} if and only if $(\beta,\Pi_\sigma)=(\beta^{(1)},\Pi_\sigma)=0$, which in turn holds if
\vskip0.2cm\par\noindent
\begin{itemize}
	\item[(i)] $\beta$ is invariant under $\theta\Longrightarrow\beta=\epsilon_k-\epsilon_{2n+1-k}$,
	\item[(ii)] $\beta^{(1)}$ is non-zero and proportional to $\alpha^{(1)}\Longrightarrow k,\ell\in\left\{i,j,2n+1-i,2n+1-j\right\}$ with $\ell\neq k,2n+1-k$. In other words $\beta\in\left\{\pm\alpha,\pm\theta\cdot\alpha,\pm\widetilde\alpha,\pm\theta\cdot\widetilde\alpha\right\}$. 
\end{itemize}  
\vskip0.2cm\par\noindent
In summary 
\vskip0.2cm\par\noindent
\begin{equation}
\label{eq:reductionhere}
\g^{\Pi_\sigma}=\mathfrak h\oplus\bigoplus_{\tiny{\begin{gathered} k\notin I\end{gathered}}}\g_{\epsilon_k-\epsilon_{2n+1-k}}\bigoplus_{\tiny{\begin{gathered} k,\ell\in I\end{gathered}}}\g_{\epsilon_k-\epsilon_\ell}\;,\qquad I:=\left\{i,j,2n+1-i,2n+1-j\right\}\;,
\end{equation}
and $\mathfrak m=[\g^{\Pi_\sigma},\g^{\Pi_\sigma}]\cong(n-2)\mathfrak{sl}_2(\mathbb C)\oplus\mathfrak{sl}_4(\mathbb C)$. We thus consider the
	periodically graded Lie algebra $\{\mathfrak{sl}_4(\mathbb C),\theta,2\}$ with Cartan subspace
$\mathbb C\alpha^{(1)}$, which is generated by the root spaces corresponding to the last contribution in \eqref{eq:reductionhere}. Since $\{\mathfrak{sl}_4(\mathbb C),\theta,2\}$ is isomorphic to $\{\mathfrak{so}_{2n+2}(\mathbb C),\theta,2\}$ for $n=2$, the result follows in a completely analogous fashion.
\vskip0.2cm\par
The strategy for $\g=E_6$ is similar. Here $\alpha=\epsilon_i-\epsilon_j$, $1\leq i,j\leq 6$, $j\neq i, 7-i$, and arguing as for $\mathfrak{sl}_{2n}(\mathbb C)$, we see that
\vskip0.3cm\par\noindent
$$
\g^{\Pi_\sigma}=\mathfrak h\oplus\bigoplus_{\tiny{\begin{gathered}\beta\in\Phi \\ \theta\cdot\beta=\beta\vphantom{\beta^{(1)}}\end{gathered}}} \g_\beta\bigoplus_{\tiny{\begin{gathered}\beta\in\Phi \\ {\beta^{(1)}\in\mathbb C^\times\alpha^{(1)}}\end{gathered}}}\!\!\!\!\!\g_\beta\;,
$$
\vskip0.3cm\par\noindent
which is a $46$-dimensional Levi subalgebra of $E_6$ (we won't write down the explicit expressions of the roots, since it is not particularly enlightening). It is necessarily isomorphic to the conformal algebra $\mathfrak{co}_{10}(\mathbb C)$.
In any case, we recognize the subalgebra of $\g^{\Pi_\sigma}$ generated by the root spaces $\g_{\epsilon_k-\epsilon_\ell}$  with $k,\ell\in I:=\left\{i,j,7-i,7-j\right\}$: it is a subalgebra isomorphic to $\mathfrak{sl}_4(\mathbb C)$, $\theta$-stable, and it includes $\mathbb C\alpha^{(1)}$. The result follows then again.
\end{proof}

\section*{Acknowledgments}
The authors would like to thank Giovanna Carnovale, Willem de Graaf, Francesco Esposito, and Oksana Yakimova for helpful and stimulating discussions.
The authors also thank the anonymous referee for carefully reading the manuscript and providing helpful comments that improved the paper.
The first author's research work was funded by FSU Jena.
The second author 
acknowledges the MIUR Excellence Department Project MatMod@TOV
awarded to the Department of Mathematics, University of Rome Tor Vergata, CUP E83C23000330006.  This article/publication was also supported by the ``National Group for
Algebraic and Geometric Structures, and their Applications'' GNSAGA-INdAM (Italy) and it is
based upon work from COST Action CaLISTA CA21109 supported by COST
(European Cooperation in Science and Technology), 
{\scriptsize\url{https://www.cost.eu}}.

\bibliographystyle{abbrv}
\bibliography{biblio}
\end{document}